\author{Joseph Vandehey}
\title{New normality constructions for continued fraction expansions}
\date{\today}
\newtheorem{thm}{Theorem}[section]
\newtheorem{cor}[thm]{Corollary}
\newtheorem{lem}[thm]{Lemma}
\newtheorem{prop}[thm]{Proposition}
\begin{document}

\begin{abstract}
Adler, Keane, and Smorodinsky showed that if one concatenates the finite continued fraction expansions of the sequence of rationals
\[
\frac{1}{2}, \frac{1}{3}, \frac{2}{3}, \frac{1}{4}, \frac{2}{4}, \frac{3}{4}, \frac{1}{5}, \cdots
\]
into an infinite continued fraction expansion, then this new number is normal with respect to the continued fraction expansion. We show a variety of new constructions of continued fraction normal numbers, including one generated by the subsequence of rationals with prime numerators and denominators:
\[
\frac{2}{3}, \frac{2}{5}, \frac{3}{5}, \frac{2}{7}, \frac{3}{7}, \frac{5}{7},\cdots.
\]
\end{abstract}

\maketitle

\section{Introduction}

A number $x\in [0,1)$ is said to be normal (to base $10$) if for any string $s=[d_1,d_2,\dots,d_k]$ of decimal digits, we have
\[
\lim_{N\to \infty} \frac{A_s(N;x)}{N} = \frac{1}{10^k}
\]
where $A_s(N;x)$ is the number of times the string $s$ appears starting in the first $N$ digits of the decimal expansion of $x$. For numbers outside of the interval $[0,1)$, we consider them to be normal if the number taken modulo $1$ is normal. While it is a simple consequence of the pointwise ergodic theorem that almost all real numbers are normal, there is no commonly used irrational number, such as $\pi$, $e$, or even $\sqrt{2}$, that is known to be normal.

However, mathematicians have constructed a wide variety of normal numbers, the first of which was found by Champernowne: he showed that the number \[ 0.123456789101112131415\ldots, \]formed by concatenating all the natural numbers in order, is normal \cite{Champernowne}. Following Champernowne, Besicovitch showed that the number \[0.149162536496481100\ldots,\] formed by taking all the perfect squares in order, is normal \cite{BesicovitchSquares}. These constructions inspired a large area of research, as mathematicians considered for which functions $f(n)$ would the number \[ 0.f(1)f(2)f(3)\dots\] be normal. A related question asks whether just concatenating the prime values of a function, \[0.f(2)f(3)f(5)f(7)f(11)\dots,\] also generates a normal number. A small selection of all the results in this area include the work of Davenport and Erd\H{o}s \cite{DavenportErdos}; Nakai and Shiokawa \cite{NakaiShiokawa1}; De Koninck and Katai \cite{dKK2}; Madritsch, Thuswaldner, and Tichy \cite{MTT}; and the author \cite{VandeheyAdditive}.

Of particular interest to this paper is the work of Copeland and Erd\H{o}s \cite{CopelandErdos}. They showed that almost all integers are $(\epsilon,k)$-normal, which refers to the fact that each string of length $k$ appears in the decimal expansion of the integer to within $\epsilon$ of the expected frequency $10^{-k}$. Thus, in place of the sequence of all positive integers, as in Champernowne, if we take a sufficiently dense subset of the positive integers and concatenate those, we expect to get a number that is normal as well. Copeland and Erd\H{o}s showed that the primes constitute a sufficiently dense subset of the integers, and thus concatenating them into $0.2357111317\dots$ produces a normal number.

The notion of normality extends nicely to continued fraction expansions. Here, despite the wide variety of construcions of  numbers normal to base $10$, there are only two distinct types of constructions that have been discovered for numbers normal with respect to the continued fraction expansion. Let us recall some basic definitions. For a real number $x$, the continued fraction expansion for $x$ is given by
\[
x = a_0+\cfrac{1}{a_1+\cfrac{1}{a_2+\cfrac{1}{a_3+\dots}}},
\]
where $a_0\in \mathbb{Z}$ and $a_i \in \mathbb{N}$ for $i\ge 1$. If $x$ is rational, then there are two possible expansions; for example, one could end in a $5$ while the other could end in a $4$ followed by a $1$. When there is ambiguity as to the digits of a rational number, we will assume throughout the rest of the paper that we are always taking the longest possible finite expansion . If $x$ is irrational, this expansion is unique and infinite. We often shorthand this notation by writing $x=\langle a_0; a_1,a_2,a_3,\dots\rangle $, or, if $a_0=0$, by $x=\langle a_1,a_2,\dots\rangle $.

The quantity $a_i(x)$ refers to the $i$th digit of $x$, if it exists. For a given expansion $\langle a_0; a_1,a_2,a_3,\dots\rangle$ the truncated expansions $\langle a_0; a_1,a_2,\dots,a_k\rangle$ are known as the convergents and are represented by the rational number $p_k/q_k$ in lowest terms.

Given a string $s=[d_1,d_2,\dots,d_k]$ of natural numbers, we let \[
C_s = \{x\in [0,1): a_i(x)=d_i, 1\le i \le k\}
\]
be the cylinder set corresponding to $s$.  Finally we also have the Gauss measure, $\mu$, which for a Lebesgue-measurable set $A\subset [0,1)$ is defined by
\[
\mu(A) = \frac{1}{\log 2} \int_A \frac{1}{1+x} \ dx.
\]

With these definitions, we say a number $x$ is continued fraction normal (or just CF-normal) if, for any finite string $s$ of natural numbers, we have
\[
\lim_{N\to \infty} \frac{A_s(N;x)}{N} = \mu(C_s),
\]
with an analogous extension to all real numbers. Here (and in the remainder of this paper) $A_s(N;x)$ is the number of times the string $s$ occurs starting in the first $N$ continued fraction digits of $x$. (For clarity, we mean that $s$ appears in the expansion of $x$ starting somewhere between the digits $a_1$ and $a_N$, inclusive.) As with decimal expansions, it is possible to show that almost all real numbers are CF-normal by using the pointwise ergodic theorem. 

The first example of a CF-normal number appears to have been given by Postnikov and Pyateckii \cite{PP}. They constructed a series of very long, but finite length strings $X_i$ with good small-scale normality properties---that is, the frequency for which all sufficiently short strings $s$ appeared in $X_i$ was close to the desired asymptotic frequency $\mu(C_s)$---with each successive $X_i$ having better and better small-scale normality properties, approximating even more strings to an even better amount. To produce their desired CF-normal number $x$, they concatenated the strings $X_i$ in succession. (This technique is  generalizable to many, many other systems, as demonstrated by Madritsch and Mance \cite{MaMa}, although curiously, they seem to have been unaware of Postnikov and Pyateckii's work.) Unfortunately, the computation of the strings $X_i$ is not nearly as elegant as Champernowne's simple construction.

For elegance, we turn to Adler, Keane, and Smorodinsky \cite{AKS}. They considered the simple sequence of rational numbers given by
\[
\frac{1}{2}, \frac{1}{3}, \frac{2}{3}, \frac{1}{4}, \frac{2}{4}, \frac{3}{4}, \cdots.
\]
They showed that if one concatenates the finite continued fraction expansions of these rational numbers in order, then the resulting infinite continued fraction is CF-normal. More precisely, the finite continued fraction expansions are $\langle 2 \rangle $, $\langle 3 \rangle$, $\langle 1,2 \rangle$, $\langle 4 \rangle$, $\langle 2\rangle$, $\langle 1,3\rangle$ and so on, so the concatenation gives $\langle 2,3,1,2,4,2,1,3,\dots \rangle$.\footnote{One could choose either of the two finite expansions for each rational and still obtain a CF-normal number. We use the short expansion for readibility.} In fact, their result is slightly stronger than this. Suppose we denote the $n$th rational in this sequence by $r_n$ and let $S\subset \mathbb{N}$ be a set such that
\[
\lim_{N\to \infty} \frac{\#\{n\le N; n \in S\}}{N} = 1,
\]
that is, the set $S$ has asymptotic density $1$. (If this limit exists and equals $\rho$, then we say the set $S$ has asymptotic density $\rho$.) Then if one concatenates the continued fraction expansions of all $r_n$ with $n\in S$,  one obtains a CF normal number. However, this is not strong enough to even remove the ``duplicated" fractions, such as $2/4$ which already appeared as $1/2$, since the asymptotic density of the corresponding set is $6/\pi^2$. (They remark that these duplicated fractions could be removed, and it is likely they could by using visible point estimates or the more careful asymptotic estimates we use in this paper, but these details are not included.)

The proof of Adler, Keane, and Smorodinsky is somewhat similar to that of Copeland and Erd\H{o}s. They show that most rationals with denominator at most $m$ are likely to have good small-scale normality properties---some equivalent of $(\epsilon,k)$-normality---and thus if one concatenates the continued fraction expansions of all the rationals with denominator at most $m$, they should obtain a number that is close to being normal, in some sense. As part of their proof, Adler, Keane, and Smorodinsky use an ergodic theorem to prove that the measure of a particular sequence of sets approaches one, but as is standard for ergodic results, the rate of convergence is not clear.

In this paper, we will use a metrical result, based on work of Philipp \cite{Philipp}, to get better asymptotics on how many rationals with denominator at most $m$ have good small-scale normality properties. This allows us to prove a variety of new constructions. The following theorem generalizes the work of Adler, Keane, and Smorodinsky and gives a continued fraction analogue of the ``combinatorial method of normal number proofs" (see Theorem 1 of Pollack and Vandehey \cite{PV}). We remark that all asymptotic notations used in this paper will be defined at the end of the introduction.

\begin{thm}\label{thm:main}
Let $\{r_i\}_{i=1}^\infty$ denote the sequence of all rational numbers (in lowest terms) in the interval $(0,1)$, ordered in the following way:
\[
r_1=\frac{1}{2}, \ r_2=\frac{1}{3}, \  r_3=\frac{2}{3}, \ r_4=\frac{1}{4}, \ r_5=\frac{3}{4}, \cdots.
\]

Let $f:\mathbb{N}\to \mathbb{N}$, and define the number $x_f$ as the number constructed by concatenating the continued fraction expansions of the rationals $r_{f(1)}, r_{f(2)}, r_{f(3)},\cdots$.

Let $L(r)$ denote the length of the continued fraction expansion of $r$. Suppose that
\[
N= o\left( \sum_{n=1}^N L(r_{f(n)}) \right) \quad \text{ and } \quad N\cdot \max_{1\le n \le N} L(r_{f(n)}) = O\left( \sum_{n=1}^N L(r_{f(n)})\right)
\]
and that for any set $S\subset \mathbb{N}$ that satisfies $\#\{n\in S: n \le x\}  = O(x/\log x)$, we have that $f^{-1}(S)$ has asymptotic density $0$. Then $x_f$ is CF normal.

\end{thm}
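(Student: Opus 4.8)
The plan is to exploit the block structure of $x_f$. Write $B_n$ for the finite continued fraction expansion of $r_{f(n)}$, put $L_n = L(r_{f(n)})$ and $T_N = \sum_{n=1}^N L_n$, so that the first $T_N$ digits of $x_f$ form the concatenation $B_1B_2\cdots B_N$. Fix a string $s$ of length $k$ and let $A^\circ_s(r)$ count the occurrences of $s$ lying entirely inside the expansion of $r$. Every occurrence of $s$ among the first $T_N$ digits of $x_f$ either lies inside a single block or starts within the last $k-1$ positions of one of the $N$ blocks, and the two cases are disjoint; hence
\[
A_s(T_N;x_f) = \sum_{n=1}^N A^\circ_s(r_{f(n)}) + O(kN),
\]
and by the hypothesis $N = o\bigl(\sum L_n\bigr)$ the error term is $o(T_N)$. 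Everything therefore reduces to showing $\sum_{n=1}^N A^\circ_s(r_{f(n)}) = \mu(C_s)\,T_N + o(T_N)$.

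The input that makes this possible is the metrical estimate developed below (this is where Philipp's work enters): for every $\epsilon>0$, the set
\[
B_{s,\epsilon} = \bigl\{\, r\in(0,1)\cap\mathbb{Q} : |A^\circ_s(r) - \mu(C_s)L(r)| > \epsilon L(r)\,\bigr\}
\]
of rationals whose expansions are not $\epsilon$-normal for $s$ is sparse, in the precise sense that $\#\{\, i\le x : r_i\in B_{s,\epsilon}\,\} = O(x/\log x)$. Granting this, fix $\epsilon>0$ and split $\{1,\dots,N\}$ into the indices $n$ with $r_{f(n)}\notin B_{s,\epsilon}$ and those with $r_{f(n)}\in B_{s,\epsilon}$. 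The first group contributes total deviation at most $\epsilon\sum_{n=1}^N L_n = \epsilon T_N$. For the second group, using the trivial bound $|A^\circ_s(r)-\mu(C_s)L(r)|\le L(r)$ together with the hypothesis $\max_{1\le n\le N}L_n = O(T_N/N)$, the contribution is at most
\[
\Bigl(\max_{1\le n\le N}L_n\Bigr)\cdot\#\{\, n\le N : r_{f(n)}\in B_{s,\epsilon}\,\} = O(T_N/N)\cdot o(N).
\]
The factor $o(N)$ is where the last hypothesis enters: the set $S := \{\, i\in\mathbb{N} : r_i\in B_{s,\epsilon}\,\}$ has counting function $O(x/\log x)$, so $f^{-1}(S)$ has asymptotic density $0$, that is, $\#\{\, n\le N : r_{f(n)}\in B_{s,\epsilon}\,\} = \#\{\, n\le N : n\in f^{-1}(S)\,\} = o(N)$. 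Thus the second group contributes $o(T_N)$; combining, $\bigl|\sum_{n=1}^N A^\circ_s(r_{f(n)}) - \mu(C_s)T_N\bigr| \le \epsilon T_N + o(T_N)$, and letting $\epsilon\to0$ gives $A_s(T_N;x_f)/T_N\to\mu(C_s)$.

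It remains to pass from the sparse sequence of thresholds $T_N$ to all $M\to\infty$. Since $L_n\ge1$ we have $T_N\ge N\to\infty$, and from $\max_{1\le n\le N+1}L_n = O(T_{N+1}/(N+1))$ we get $L_{N+1} = O(T_{N+1}/N)$, whence $T_{N+1} = T_N(1+O(1/N)) = T_N + o(T_N)$; consecutive thresholds are thus asymptotically equal, and $L_{N+1} = o(T_N)$. Given $M$, choose $N$ with $T_N\le M<T_{N+1}$; then $M = T_N(1+o(1))$ and $|A_s(M;x_f) - A_s(T_N;x_f)| \le L_{N+1} = o(T_N)$, so $A_s(M;x_f)/M\to\mu(C_s)$. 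Since $s$ was an arbitrary finite string, $x_f$ is CF-normal.

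The genuine obstacle is the metrical estimate $\#\{\, i\le x : r_i\in B_{s,\epsilon}\,\} = O(x/\log x)$: one must pass from the soft fact that the set of real $x$ with $|A^\circ_s(\text{first }L\text{ digits}) - \mu(C_s)L| > \epsilon L$ has small measure to a counting statement about rationals, with a genuine saving of (at least) a factor $\log x$, which forces the use of Philipp-type moment and large-deviation estimates for continued-fraction digits together with a careful count of rationals having anomalously short expansions. By contrast, within the present argument the only subtleties are the boundary bookkeeping in the first display and the verification that the two growth hypotheses are precisely calibrated to absorb, respectively, the $O(kN)$ boundary error and the $O(T_N/N)\cdot o(N)$ bad-block error.
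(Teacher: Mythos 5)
Your proposal is correct and takes essentially the same approach as the paper: split each prefix of $x_f$ along block boundaries, absorb the $O(kN)$ boundary terms via $N=o(T_N)$, invoke the metrical estimate (the paper's Proposition \ref{prop:esnormal2}) together with the density-transfer hypothesis to make the bad blocks contribute $o(T_N)$, use $\max L_n \cdot N = O(T_N)$ for the trivial bound on those bad blocks, and let $\epsilon\to 0$. The only cosmetic difference is that you prove the limit along the subsequence $T_N$ and then interpolate to arbitrary $M$, whereas the paper starts from arbitrary $N$ and locates the block index $M(N)$; both routes use the same two growth hypotheses in the same places.
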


Although Theorem \ref{thm:main} is stated in a very general fashion, we will be interested in this paper primarily in cases where we consider subsequences of the sequence of rationals considered by Adler, Keane, and Smorodinsky. The following corollary follows from Theorem \ref{thm:main}.

\begin{cor}\label{cor:main}
Let $i_1<i_2<i_3<\dots$ be an infinite, increasing subsequence of $\mathbb{N}$. Let $R$ denote the subsequence of $\{r_i\}_{i=1}^\infty$ considered in Theorem \ref{thm:main}, given by $\{r_{i_j}\}_{j=1}^\infty$. Suppose that if $R(m)$ denotes the set of $p/q\in R$ in lowest terms with $q\le m$, then
\[
\frac{m^2}{\log m} = o ( |R(m)|), \ m\to \infty.
\]
Then the number formed by concatenating the continued fraction expansions of the rationals $r_{i_1},r_{i_2},r_{i_3},\dots$ in order is CF-normal.
\end{cor}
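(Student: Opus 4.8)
The strategy is to verify the three hypotheses of Theorem~\ref{thm:main} for the strictly increasing map $f\colon\mathbb{N}\to\mathbb{N}$, $f(j)=i_j$; the conclusion is then immediate. Write $m_N$ for the denominator of $r_{i_N}$. Since the $r_i$ — and hence any subsequence of them — are listed in nondecreasing order of denominator, $m_N$ is the largest denominator among $r_{i_1},\dots,r_{i_N}$, it tends to infinity with $N$ (as $R$ is infinite), and every element of $R$ of denominator at most $m_N-1$ already occurs among $r_{i_1},\dots,r_{i_N}$. I will use three classical facts: $\sum_{q\le m}\varphi(q)\sim\tfrac{3}{\pi^2}m^2$, which pins down the index of $r_{i_N}$ in the full list $\{r_i\}$ as $i_N\asymp m_N^2$; the bound $L(p/q)\ll\log q$, so that $\max_{1\le n\le N}L(r_{i_n})\ll\log m_N$; and the elementary Piltz-type inequality $\#\{(a_1,\dots,a_j)\in\mathbb{N}^j:a_1\cdots a_j\le m\}\le m(\log m+c)^{j-1}/(j-1)!$ for an absolute constant $c$, valid uniformly in $j$.

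The quantitative heart of the argument is the claim that, under the hypothesis $m^2/\log m=o(|R(m)|)$, all but $o(|R(m)|)$ of the rationals in $R(m)$ have continued fraction length at least $\varepsilon_0\log m$, for a suitable absolute $\varepsilon_0>0$. Indeed, the final convergent denominator of a finite continued fraction $\langle a_1,\dots,a_j\rangle$ satisfies $q_j\ge a_1\cdots a_j$, so the number of $p/q\in(0,1)$ in lowest terms with $q\le m$ and $L(p/q)\le k$ is at most $\sum_{j\le k}m(\log m+c)^{j-1}/(j-1)!$; for $k=\varepsilon_0\log m$ with $\varepsilon_0$ small the largest term dominates, and Stirling's formula bounds the whole quantity by $O(m^{3/2}\log m)=o(m^2/\log m)$. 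Comparing with $|R(m)|$ proves the claim, and hence
\[
\sum_{n=1}^N L(r_{i_n})\ \ge\ \sum_{r\in R(m_N-1)}L(r)\ \ge\ (1-o(1))\,\varepsilon_0(\log m_N)\,|R(m_N-1)|.
\]

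It remains to compare $N$ with $|R(m_N-1)|$. The number of elements of $R$ of denominator exactly $m_N$ is at most $\varphi(m_N)<m_N$, which is $o(|R(m_N-1)|)$ since $|R(m_N-1)|\gg(m_N-1)^2/\log(m_N-1)$; hence $N=(1+o(1))|R(m_N-1)|$. Feeding this into the displayed inequality gives $\sum_{n\le N}L(r_{i_n})\gg N\log m_N$, which (as $\log m_N\to\infty$) is the first hypothesis of Theorem~\ref{thm:main}, and which together with $\max_{1\le n\le N}L(r_{i_n})\ll\log m_N$ is the second. For the third, if $S\subset\mathbb{N}$ satisfies $\#\{s\in S:s\le x\}=O(x/\log x)$, then since $i_1,\dots,i_N$ are distinct and at most $i_N$,
\[
\#\{j\le N:i_j\in S\}\ \le\ \#\{s\in S:s\le i_N\}\ =\ O\!\left(\frac{i_N}{\log i_N}\right)\ =\ O\!\left(\frac{m_N^2}{\log m_N}\right)\ =\ o(N),
\]
the final equality again coming from $N=(1+o(1))|R(m_N-1)|$ and $m^2/\log m=o(|R(m)|)$; so $f^{-1}(S)$ has asymptotic density zero, and Theorem~\ref{thm:main} applies.

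The step I expect to be hardest is the middle one: showing that a full-density subset of $R(m)$ has continued fraction length $\gg\log m$. The density hypothesis tells us only how many rationals $R$ contains, not \emph{which} ones, so one must rule out a dense family whose members all happen to have anomalously short expansions; this forces a count of rationals of bounded continued fraction length sharp enough to still beat $m^2/\log m$ when the length is allowed to grow like a small constant times $\log m$ — which is exactly where the factorial denominator in the Piltz estimate is essential, a naive bound of size $m(\log m)^{k}$ being far too weak. The remaining steps, involving only the ordering of the $r_i$ and elementary consequences of the density hypothesis, are routine.
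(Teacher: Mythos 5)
Your proof is correct and follows the same high-level strategy as the paper (verify the three hypotheses of Theorem~\ref{thm:main} for $f(j)=i_j$), and your treatment of the third (density) hypothesis is essentially identical to the paper's. Where you diverge is in the middle step. The paper shows that almost every rational in $R(m)$ has continued fraction length $\asymp\log m$ by appealing to Proposition~\ref{prop:esnormal}: an $(\epsilon,s)$-normal rational with denominator in $[m^{1/2},m]$ automatically has $L\asymp\log m$ via \eqref{eq:esnormal2}, and the exceptions number $O(m^2/\log m)$. You instead prove the same lower bound on typical lengths by a purely combinatorial count: using $q_{L(r)}\ge a_1\cdots a_{L(r)}$ and a Piltz-type bound on $\sum_{j\le k}\#\{a_1\cdots a_j\le m\}$ to show that rationals with $q\le m$ and $L\le\varepsilon_0\log m$ are $o(m^2/\log m)$ in number. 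This is a genuinely different route for that lemma: it is more elementary and more self-contained (it does not invoke the metrical Proposition~\ref{prop:esnormal}/\ref{prop:goodcount}/\ref{prop:gooddenom} machinery at all for the length estimate), and it also reads as a more explicit account than the paper's rather compressed final paragraph, which just gestures at ``applying the ideas of the proof of Proposition~\ref{prop:esnormal2}.'' The trade-off is that the paper's route reuses machinery already built for Proposition~\ref{prop:mainestimate}, whereas yours introduces an independent counting argument.

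One small technical caveat worth flagging: the Piltz-type inequality you quote, $\#\{(a_1,\dots,a_j):a_1\cdots a_j\le m\}\le m(\log m+c)^{j-1}/(j-1)!$ with $c$ an \emph{absolute} constant uniform in $j$, is not quite the standard statement; the usual induction gives something like $c=c(j)\asymp j$ (e.g.\ $\sum_{n\le x}d_j(n)\le x(\log x+j-1)^{j-1}/(j-1)!$). This does not damage your argument, since you only use it for $j\le\varepsilon_0\log m$, where $\log m+O(j)\le(1+O(\varepsilon_0))\log m$ and the final exponent $\varepsilon_0\bigl(1-\log\varepsilon_0+O(\varepsilon_0)\bigr)$ still tends to $0$ as $\varepsilon_0\to0$; but the statement as written should be adjusted.
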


\begin{cor}\label{cor:squarefree}
If one concatenates the sequence of rationals $r_i$ that are in lowest terms with squarefree numerator and denominator, then the resulting number is CF-normal.
\end{cor}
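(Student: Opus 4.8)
The plan is to derive this from Corollary~\ref{cor:main}. Let $R$ be the subsequence of $\{r_i\}_{i=1}^\infty$ consisting of those rationals $p/q\in(0,1)$ in lowest terms for which both $p$ and $q$ are squarefree; this is infinite, as it contains $1/p$ for every prime $p$. By Corollary~\ref{cor:main} it suffices to show $m^2/\log m = o(|R(m)|)$ as $m\to\infty$, where $R(m)$ is the set of such fractions with $q\le m$; and for this it is enough to prove the lower bound $|R(m)|\gg m^2$. In fact I would establish the asymptotic $|R(m)|\sim c\, m^2$ with an explicit positive constant $c$.

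First I would reduce to counting coprime pairs of squarefree integers. Writing $\mu^2$ for the indicator function of the squarefree integers, symmetry gives $|R(m)| = \tfrac12\big(T(m)-1\big)$, where, using $\mathbf{1}[\gcd(p,q)=1]=\sum_{d\mid\gcd(p,q)}\mu(d)$ and interchanging summations,
\[
T(m) := \#\{(p,q): 1\le p,q\le m,\ \gcd(p,q)=1,\ \mu^2(p)=\mu^2(q)=1\} = \sum_{d\le m}\mu(d)\Big(\sum_{\substack{n\le m\\ d\mid n}}\mu^2(n)\Big)^{2}.
\]
For squarefree $d$, substituting $n=dn'$ shows $\sum_{n\le m,\ d\mid n}\mu^2(n)$ equals the number of squarefree $n'\le m/d$ coprime to $d$, and peeling off the largest square divisor of $n'$ gives the elementary estimate
\[
\sum_{\substack{n\le m\\ d\mid n}}\mu^2(n) = \frac{m}{d\,\zeta(2)}\prod_{p\mid d}\frac{p}{p+1} + O\!\Big(2^{\omega(d)}\sqrt{m/d}\,\Big),
\]
uniformly in $d$. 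Squaring and summing, the main term contributes $c\,m^2 + O(m)$ with
\[
c = \frac{1}{\zeta(2)^{2}}\sum_{d\ge 1}\frac{\mu(d)}{d^{2}}\prod_{p\mid d}\Big(\frac{p}{p+1}\Big)^{2} = \frac{1}{\zeta(2)^{2}}\prod_{p}\Big(1-\frac{1}{(p+1)^{2}}\Big) = \prod_{p}\frac{(p-1)^{2}(p+2)}{p^{3}} > 0,
\]
while the cross terms contribute $O(m^{3/2})$ (using the convergence of $\sum_d 2^{\omega(d)}d^{-3/2}$) and the squared error terms contribute $O\big(m(\log m)^{4}\big)$ (using $\sum_{d\le m}4^{\omega(d)}d^{-1}\ll(\log m)^{4}$). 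Hence $T(m)\sim c\,m^{2}$ and $|R(m)|\sim\tfrac{c}{2}m^{2}$.

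Since $c>0$ this yields $|R(m)|\gg m^2$, so $\frac{m^2/\log m}{|R(m)|} \ll \frac{1}{\log m} \to 0$; this is precisely the hypothesis of Corollary~\ref{cor:main}, which then gives CF-normality of the concatenation. The only step demanding genuine care is the displayed sieve estimate: one must have the error term in the count of squarefree integers coprime to $d$ uniform enough in $d$ that, after squaring, summing it against $\mu(d)$ over $d\le m$ leaves a total of $o(m^2)$. The rest---the M\"obius inversion, recognizing the Euler product, and checking that $\prod_p\big(1-(p+1)^{-2}\big)$ is positive and finite---is routine. (One can instead stop at the bound $|R(m)|\gg m^2$, proved by carrying out the same computation as an inequality, if the precise constant is not wanted.)
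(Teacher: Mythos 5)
Your proposal is correct and follows the same route as the paper: reduce to the claim that $|R(m)|\asymp m^2$ and then invoke Corollary~\ref{cor:main}. The paper itself dispatches this in a single sentence, simply asserting that $R(m)$ has size on the order of $m^2$ without any computation, so what you have done is supply the detail that the paper elides. Your M\"obius inversion over the common divisor, the square-divisor sieve for squarefree integers coprime to $d$, and the resulting Euler product $c=\prod_p\frac{(p-1)^2(p+2)}{p^3}>0$ are all correct, and the error bookkeeping (cross terms $O(m^{3/2})$, squared errors $O(m(\log m)^4)$) is sound. One remark: for the purposes of Corollary~\ref{cor:main} you only need the lower bound $|R(m)|\gg m^2$, so the full asymptotic with explicit constant is more than necessary; but since even a lower bound of that strength genuinely requires some version of this sieve argument (the easy lower bounds, such as restricting to pairs of primes, only give $m^2/(\log m)^2$, which is insufficient), the level of detail is appropriate.
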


Corollary \ref{cor:squarefree} follows from the fact that the sets $R(m)$ will have size on the order of $m^2$.

However, Corollary \ref{cor:main} is not strong enough even to consider the subsequence composed of all numerators but only prime denominators. (In this case, we would have $|R(m)| = O(m^2/\log m)$.) For this we must prove new results.

\begin{thm}\label{thm:primes}Let $\mathbb{P}$ denote the set of primes.
Let $R$ be one of the following subsequences of the rational numbers $\{r_i\}_{i=1}^\infty$ considered in Theorem \ref{thm:main}: 
\begin{enumerate}
\item the subsequence whose numerators are in $\mathbb{N}$ and whose denominators are in $\mathbb{P}$; 
\item the subsequence whose numerators are in $\mathbb{P}$ and whose denominators are in $\mathbb{N}$;  or,
\item the subsequence whose numerators and denominators are in $\mathbb{P}$.
\end{enumerate}
If the indices of the $r_i\in R$ are, in increasing order, $i_1,i_2,i_3,\dots$, then the number formed by concatenating the continued fraction expansions of the rationals $r_{i_1},r_{i_2},r_{i_3},\dots$ in order is CF-normal.
\end{thm}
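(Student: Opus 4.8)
The plan is to re-run the proof of Theorem~\ref{thm:main} with a finer count of the rationals in $R$ that lack good small-scale normality. Note first that Theorem~\ref{thm:main} does not apply directly: taking $S=\{i_j\}_{j\ge 1}$ gives $\#\{n\in S:n\le x\}=O(x/\log x)$, while $f^{-1}(S)=\mathbb{N}$ has density $1$. Write $R(m)$ for the set of $p/q\in R$ in lowest terms with $q\le m$. The combinatorial part of the proof of Theorem~\ref{thm:main} reduces CF-normality of the concatenation of $r_{i_1},r_{i_2},\dots$ to proving, for every finite string $s$,
\[
\sum_{r\in R(m)}A_s\bigl(L(r);r\bigr)=\mu(C_s)\sum_{r\in R(m)}L(r)+o\!\left(\sum_{r\in R(m)}L(r)\right)\qquad(m\to\infty),
\]
and this reduction uses only the two length hypotheses of Theorem~\ref{thm:main}. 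I would verify those for each of the three sequences: in every case $\sum_{r\in R(m)}L(r)\asymp|R(m)|\log m$ (so the average continued fraction length tends to infinity) and $\max_{r\in R(m)}L(r)\ll\log m$ (so $|R(m)|\max_r L(r)\ll\sum_{r\in R(m)}L(r)$); these follow from the prime number theorem, evaluating $\sum_{q\le m}q$, $\sum_{q\le m}q/\log q$ and their prime-restricted analogues, together with the fact that a prime-numerator or prime-denominator fraction with denominator near $q$ typically has length of order $\log q$ --- a mild instance of the equidistribution estimate used below. A dyadic blocking over ranges of denominators then gives the reduction.

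To attack the displayed identity, call $r\in R(m)$ \emph{defective} if $|A_s(L(r);r)-\mu(C_s)L(r)|>\epsilon L(r)$ for some string $s$ with $|s|\le k$. Non-defective $r$ contribute to the display within $\epsilon L(r)$ of the target, and all defective $r$ together contribute at most $\max_r L(r)\cdot\#\{\text{defective }r\}\ll(\log m)\#\{\text{defective }r\}$; since $|R(m)|\log m\asymp\sum_{r\in R(m)}L(r)$, it is enough to show $\#\{r\in R(m):r\text{ defective}\}=o(|R(m)|)$ for each fixed $\epsilon,k$. Fix $\epsilon,k$, let $\ell$ be a fixed large multiple of $\log m$ with $L(r)\le\ell$ for all $r\in R(m)$, and discard the $o(|R(m)|)$ rationals with $L(r)$ below a fixed small multiple of $\log m$ (such fractions are atypical). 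Since $A_s(L(r);r)=\sum_{0\le j<L(r)}\mathbf{1}_{C_s}(T^jr)+O(k)$ for the Gauss map $T$, Chebyshev's inequality over the $O(1)$ admissible $s$ bounds the number of remaining defective $r$ by
\[
\ll_{\epsilon,k}\ \frac{1}{\ell^{2}}\sum_{|s|\le k}\ \sum_{r\in R(m)}\left(\sum_{0\le j<L(r)}\bigl(\mathbf{1}_{C_s}(T^jr)-\mu(C_s)\bigr)\right)^{\!2}+o(|R(m)|),
\]
so it suffices to show each inner second moment is $\ll\sum_{r\in R(m)}L(r)\asymp|R(m)|\log m$; the bound then becomes $\ll|R(m)|/\log m=o(|R(m)|)$. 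Expanding the square over pairs $(j_1,j_2)$, the terms with $|j_1-j_2|$ large are controlled by the exponential $\psi$-mixing of the Gauss map --- this is Philipp's metrical result in its second-moment form --- while the remaining terms reduce to an arithmetic equidistribution statement for the admissible fractions with denominator $q$ (all numerators coprime to $q$ in case~(1), the primes below $q$ in cases~(2) and (3)) against the preimage cylinders $T^{-j}C_s$, with a power-saving error in $q$. To obtain such an estimate I would pass through the Euclidean-algorithm description of the $T$-orbit of $a/q$ --- the successive remainders are continuants of the tail of the expansion --- and the resulting link between the continued fraction of $a/q$ and $\bar a\bmod q$; this converts the count into incomplete Kloosterman sums, which Weil's bound handles, and in the cases with prime numerators one additionally feeds in the distribution of primes below $q$ in progressions and short intervals, on average over $q\le m$ via Bombieri--Vinogradov (or Barban--Davenport--Halberstam for the second-moment form). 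Summing over the family --- primes $q\le m$ in cases~(1) and (3), all $q\le m$ in case~(2) --- and using the prime number theorem then closes the argument.

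The main obstacle is precisely this arithmetic equidistribution of prime-numerator and/or prime-denominator fractions against cylinder sets, uniformly both in the modulus $q$ and in the rank of the cylinder, which grows like $\log m$: naive boundary counting over the many cylinders comprising a preimage $T^{-j}C_s$ is far too lossy --- already for case~(1), where $\{a/q\}$ is perfectly equidistributed in $[0,1]$, this loses a power of $m$ --- so genuine cancellation must be extracted. The Kloosterman input is comfortable when only one of the numerator or denominator is constrained, but case~(3) is delicate, since there the prime variable carrying the digit constraints must then be summed against the sparse set of prime moduli; the technical crux is to decompose the shifted cylinders so that only $O(q^{1-\delta})$ fractions per modulus need the trivial bound, keeping the total error safely below $\sum_{r\in R(m)}L(r)$. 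Once this lemma is in place, the three cases should go through uniformly.
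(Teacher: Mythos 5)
Your high-level plan is sound, and you correctly observe that Theorem~\ref{thm:main} does not apply directly and that one must instead bound the number of ``defective'' rationals in $R(m)$ itself. However, the route you take after that is fundamentally different from the paper's, and it has a serious unfilled gap, while the paper's method avoids the problem entirely.

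The paper never attempts to prove equidistribution of $R_i(m)$ against cylinder sets, and in particular never computes a second moment of $A_s(L(r);r)-\mu(C_s)L(r)$ over $r\in R_i(m)$. Instead, it keeps Philipp's second-moment estimate at the level of the Gauss measure (Propositions~\ref{prop:goodcount} and~\ref{prop:gooddenom}), which already shows that the union of ``bad'' rank-$n$ cylinders has Lebesgue measure $O(1/\log m)$. The only new arithmetic input is a one-sided bound: for each cylinder $C_{\frak{s}}$ (with $q_n$, $q_{n-1}$ in the usual range), the number of elements of $R_i(m)$ in $C_{\frak{s}}$ is $\ll m^2/\bigl(q_n(q_n+q_{n-1})(\log m)^{\kappa/2}\bigr)$, where the extra $(\log m)^{-\kappa/2}$ saving (with $\kappa=1,1,2$ for types $1$, $2$, $3$) comes from a Brun upper-bound sieve applied to the primality of the linear forms $uq_n+vq_{n-1}$ and $up_n+vp_{n-1}$ (Lemma~\ref{lem:primes}). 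Summing this upper bound over the bad cylinders replaces $1/(q_n(q_n+q_{n-1}))$ by the small Lebesgue measure $O(1/\log m)$ and gives $O\bigl(m^2/(\log m)^{1+\kappa/2}\bigr)$, which is $o(|R_i(m)|)$. The crucial point is that an \emph{upper bound} on $|C_{\frak{s}}\cap R_i(m)|$ is enough; no asymptotic, hence no cancellation and no equidistribution, is needed. This is what makes the argument elementary and makes case~(3) no harder than the others, since the two-condition sieve bound is just as easy as the one-condition bound.

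Your version instead requires an honest asymptotic for $\sum_{r\in R(m)}\mathbf{1}_{C_s}(T^{j_1}r)\mathbf{1}_{C_s}(T^{j_2}r)$ for all $j_1,j_2\ll\log m$, i.e.\ equidistribution of the $T$-orbits of the fractions in $R(m)$ against preimage cylinders of rank up to $\sim\log m$ with a power saving in $m$. You are candid that this is ``the technical crux,'' but that is precisely the gap: this lemma is far harder than anything the paper uses and is not supplied. The $\psi$-mixing of the Gauss map is a statement about the Gauss measure, not about the finite set $R(m)$, so it cannot by itself control the large-gap terms; it only tells you that $\lambda(T^{-j_1}C_s\cap T^{-j_2}C_s)\approx\mu(C_s)^2$, and you still need $R(m)$ to sample that intersection faithfully, which is again the unproved equidistribution. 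For case~(1) a Kloosterman-sum approach at this level of uniformity is already a substantial project (and is much stronger than what is needed); for case~(3) you would need strong bilinear information about primes against the sparse family of prime moduli, and the decomposition you gesture at (``only $O(q^{1-\delta})$ fractions per modulus need the trivial bound'') is not known to exist. A minor additional slip: for continued fractions there are infinitely many strings $s$ of each length, so ``Chebyshev over the $O(1)$ admissible $s$ with $|s|\le k$'' is not literally correct; one must fix a single $s$ at a time and argue as in Proposition~\ref{prop:esnormal2}, but that is easy to repair. The substantive gap is the missing equidistribution lemma, which the paper's sieve argument is specifically designed to avoid.
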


Although we will not make it precise here, the statement of Theorem \ref{thm:primes} can be improved a fair amount. For example, we could replace $\mathbb{P}$ by any sufficiently dense subset of the primes, such as the set of primes congruent to $1 $ modulo $4$.

We close the introduction with an open problem. The results of this paper, when compared to the prior work of Adler, Keane, and Smorodinsky, mimic how Copeland and Erd\H{o}s extended the work of Champernowne, by replacing the set of positive integers with the set of primes. So we ask: can an analogy of Besicovitch's work be proven in the continued fraction case---that  is, can one form a CF-normal number by concatenating those rationals whose numerators and denominators are perfect squares? This may be possible by breaking the continued fraction expansion of such rationals into two pieces, each of which has denominator around size $m$.

We will make frequent use of asymptotic notations in this paper. By $f(x)=O(g(x))$, equivalently $f(x) \ll g(x)$, we mean that there exists some constant $C$, called the implicit constant, such that $|f(x)|\le C |g(x)|$. By $f(x) \asymp g(x)$, or $f(x)$ is on the order of $g(x)$, we mean that $f(x)= O(g(x))$ and $g(x)=O(f(x))$. By $f(x)=o(g(x))$ we mean that $\lim_{x\to \infty} f(x)/g(x) = 0$. By $f(x) \sim g(x)$, we mean that $f(x) = g(x) (1+o(1))$.

\section{Metrical results}

In this section we will provide metric results which will be required to prove our main results. Many of these results are variants of work of Philipp \cite{Philipp}. We could cite Philipp's results directly, but there are some spots where he glosses over some complicated calculations, and we provide them here.

First we recall some elementary facts about continued fractions. For references, see \cite{KhinchinBook} or  sections 1.3 of \cite{DK}. 

Let $T:[0,1)\to [0,1)$ be the standard Gauss map given by
\[
Tx = \begin{cases} \frac{1}{x} - \lfloor \frac{1}{x} \rfloor, & x\neq 0,\\
0, & x=0.
\end{cases}
\]
This acts as a forward shift on continued fraction expansions, so that $T\langle a_1,a_2,a_3,\dots\rangle =\langle a_2,a_3,\dots\rangle$. The map $T$ leaves the Gauss measure $\mu$ invariant, that is, for any measurable set $A$, we have $\mu(A)= \mu(T^{-1} A)$.

If $x=\langle a_1,a_2,a_3,\dots\rangle$ with $n$th convergent $p_n/q_n = \langle a_1,a_2,\dots, a_n\rangle$, then $q_{n-1}/q_n = \langle a_n,a_{n-1},\dots,a_1\rangle$ and $p_{n-1}/p_n = \langle a_n,a_{n-1}, \dots, a_2\rangle$.  Given a string $s=[d_1,d_2,d_3,\dots,d_k]$, the cylinder set $C_s$ consists of all points between \begin{equation}\label{eq:endpoints} \frac{p_n}{q_n}= \langle d_1,d_2,\dots,d_k\rangle \text{ and } \frac{p_n+p_{n-1}}{q_n+q_{n-1}} = \langle d_1,d_2,\dots, d_k+1\rangle. \end{equation}
Thus the measure of the cylinder set can be calculated to be
\[
\mu(C_s) = \frac{1}{\log 2} \left| \log \frac{(p_n+q_n)(q_{n-1}+q_n)}{q_n(p_{n-1}+p_n+q_{n-1}+q_n)} \right|.
\]
This is invariant if we swap $p_n$ and $q_{n-1}$ and thus also left invariant if we swap $s$ for $\overline{s}=[d_k,d_{k-1},d_{k-2},\dots,d_1]$. 

Given two rational numbers $r=p/q$ and $r'=v/u$ in lowest terms, concatenating  their continued fraction expansions gives the rational number
\[
\frac{up+vp'}{uq+vq'}
\]
where $p'/q'$ is the rational number obtained by removing the last continued fraction digit from the expansion of $r$. (Note: the result of the concatenation will depend on which of the two expansions we choose for $r$.)

The denominators of the convergents also satisfy a recurrence relation: $q_0 = 1$, $q_1=a_1$ and $q_n=a_n q_{n-1}+q_{n-2}$ for $n\ge 2$. The numerators satisfy a similar recurrence. From these it can be shown that $|p_nq_{n-1} - p_{n-1}q_n |= 1$. These facts, together with \eqref{eq:endpoints}, imply that the Lebesgue measure of a cylinder set $C_s$ is given by $q_n^{-1}(q_n+q_{n-1})^{-1}$. In addition, since $a_n\ge 1$ for all $n\ge 1$, the recurrence relation implies that $q_n$ must be of the size of the $n$th Fibonacci number, i.e.,
\begin{equation}\label{eq:equation}
q_n \gg G^n
\end{equation} where $G=(1+\sqrt{5})/2$. These facts together imply that $\lambda(C_s) =O( 2^{-k})$ if $s$ has length $k$ (in which case we say that $C_s$ is a rank $k$ cylinder).

For the remainder of this section let $m\ge 3$ be an integer, and let $g=\pi^2/(12\log 2)$ be the logarithm of the Khinchin-L\'{e}vy constant. Let $\delta, \eta$ be positive real numbers with $\delta<1/3$. We will also let $s=[d_1,d_2,\dots,d_k]$ be a finite, nonempty string of positive integers.

We note that $g$ can be written as
\begin{equation}\label{eq:gformulation}
g= \int_0^1 -\log x \ d\mu(x) = \frac{1}{\log 2} \int_0^1 \frac{-\log x}{1+x} \ dx.
\end{equation}

By classical ergodic results (see, for example, section 3.5 of \cite{DK}), we have that for almost all $x$, that 
\begin{equation}\label{eq:qnexpected}
\lim_{N\to \infty} \frac{\log q_N(x)}{N} = g,
\end{equation}
where $q_N(x)$ denotes the denominator of the $N$th convergent to $x$,
and
\begin{equation}\label{eq:sexpected}
\lim_{N\to \infty} \frac{A_s(N;x)}{N} = \mu(C_s).
\end{equation}
The formulas \eqref{eq:qnexpected} and \eqref{eq:sexpected} tell us the expected behavior of  continued fraction expansions. 

For the following definition, let  $n=n_\delta= \lfloor (1-2\delta)(\log m)/g\rfloor$, so that we expect rational numbers with denominator $m$ to have slightly more than $n$ continued fraction digits. As before, we take $L(r)$ to be the number of continued fraction digits of $r$, ignoring any $a_0$ digit. We define $\Gamma_{m,\delta,s,\eta}$ to be the set of all fractions $r=\langle a_1,a_2,a_3,\dots,a_{L(r)}\rangle $ in the interval $(0,1)$ that satisfy the following conditions
\begin{enumerate}
\item If $r$ is written in lowest terms, then the denominator of $r$ is at most $m$.
\item Either $L(r)< n$ or  \[\left|\frac{\log q_{n}}{n} - g\right|>\delta\] or
\[
\left| \frac{\#\{0 \le i \le n -k: [a_{i+1},a_{i+2},\dots,a_{i+k}]=s\}}{n}- \mu(C_s) \right| >\eta
\]
\end{enumerate} 
The fractions in $\Gamma_{m,\delta,s,\eta}$ are numbers which have somewhat unusual properties, either their continued fraction expansions are very short or their $n$th convergent falls away from the expected behavior. The main result of this section is the following, which suggests that these numbers are rather rare:

\begin{prop}\label{prop:mainestimate}
For fixed $\delta,s,\eta$ and sufficiently large $m$, we have $\Gamma_{m,\delta,s,\eta} = O(m^2/\log m)$.
\end{prop}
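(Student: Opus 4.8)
The plan is to split $\Gamma=\Gamma_{m,\delta,s,\eta}$ according to the length of the expansion of its members: write $\Gamma=\Gamma'\cup\Gamma''$, where $\Gamma'$ collects the $r$ with $L(r)<n$ (so condition (2) holds via its first clause) and $\Gamma''$ collects the $r\in\Gamma$ with $L(r)\ge n$ (so one of the last two clauses holds). I would bound $\Gamma''$ by grouping its members by the length-$n$ prefix of their continued fraction expansion and reducing to a Lebesgue-measure estimate, and bound $\Gamma'$ by a large-deviation count for the expansion length.

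\textbf{The long part.} If $r=\langle a_1,\dots,a_L\rangle\in\Gamma''$, then $L\ge n$ and the prefix $[a_1,\dots,a_n]$ already realizes one of the last two clauses; that is, the rank-$n$ cylinder $C=C_{[a_1,\dots,a_n]}$, with convergent $p_n/q_n$, is ``bad'' in the sense that $|\log q_n/n-g|>\delta$ or that the frequency of $s$ among $[a_{i+1},\dots,a_{i+k}]$, $0\le i\le n-k$, differs from $\mu(C_s)$ by more than $\eta$. Using the matrix form of the concatenation formula recalled after \eqref{eq:endpoints}: if $r$ has length-$n$ prefix that of $C$ and we write $\beta=\langle a_{n+1},\dots,a_L\rangle=u/v$ in lowest terms, then $q_L=q_n v+q_{n-1}v'$ with $0\le v'<v$, so $v\le q_L/q_n\le m/q_n$; since $r\mapsto(C,\beta)$ is injective, the number of $r$ with denominator at most $m$ whose length-$n$ prefix is that of $C$ is at most the number of rationals in $(0,1)$ with denominator at most $m/q_n$, which is $O((m/q_n)^2)$ uniformly in $C$ (and is $0$ unless $q_n\le m$). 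Summing over the bad cylinders and using $\lambda(C)=q_n^{-1}(q_n+q_{n-1})^{-1}\ge\tfrac12 q_n^{-2}$ gives
\[
|\Gamma''|\ \ll\ m^2\sum_{C\ \mathrm{bad}}q_n^{-2}\ \ll\ m^2\,\lambda\!\left(E_1(n)\cup E_2(n)\right),
\]
where $E_1(n)=\{x:|\log q_n(x)/n-g|>\delta\}$ and $E_2(n)=\{x:|\#\{0\le i\le n-k:[a_{i+1}(x),\dots,a_{i+k}(x)]=s\}/n-\mu(C_s)|>\eta\}$, since $E_1(n)\cup E_2(n)$ is exactly the union of the bad rank-$n$ cylinders.

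\textbf{The metrical estimate.} It remains to show $\lambda(E_1(n))+\lambda(E_2(n))=O(1/n)$; as $n\asymp\log m$ this gives $|\Gamma''|=O(m^2/\log m)$, and because $\lambda$ and $\mu$ are boundedly equivalent it suffices to bound $\mu$ of these sets. Writing $S_Nf=\sum_{i=0}^{N-1}f\circ T^i$ for Birkhoff sums under the Gauss map, one has the classical uniform identity $\log q_n(x)=S_n\varphi(x)+O(1)$ with $\varphi(x)=-\log x$, and $\int\varphi\,d\mu=g$ by \eqref{eq:gformulation}; hence for $n$ large $E_1(n)\subseteq\{|S_n(\varphi-g)|>\tfrac{\delta}{2}n\}$, a large-deviation event for the mean-zero observable $\varphi-g\in L^2(\mu)$. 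Similarly, up to the negligible change of replacing $n$ by $n-k+1$, $E_2(n)\subseteq\{|S_{n-k+1}(\mathbf 1_{C_s}-\mu(C_s))|>\tfrac{\eta}{2}(n-k+1)\}$ for the bounded mean-zero observable $\mathbf 1_{C_s}-\mu(C_s)$. Since the Gauss map is exponentially ($\psi$-)mixing, one gets the variance bound $\mathrm{Var}_\mu(S_Nf)=O(N)$ in both cases, and Chebyshev's inequality yields $\mu(E_i(n))=O(1/n)$. This variance bound—especially for the unbounded (though $L^2$) observable $-\log x$—is the metrical input based on Philipp \cite{Philipp}, and carefully supplying it is the main content of this section.

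\textbf{The short part, and the main obstacle.} The set $\Gamma'$ consists of the rationals in $(0,1)$ of denominator at most $m$ with fewer than $n=\lfloor(1-2\delta)(\log m)/g\rfloor$ partial quotients; since a typical rational of denominator about $m$ has roughly $(\log m)/g$ partial quotients, these are atypical, and the bound $|\Gamma'|=O(m^2/\log m)$ is a large-deviation statement for the length of the continued fraction expansion (equivalently, of the Euclidean algorithm). It can be extracted from the same mixing circle of ideas: the contribution of $q\le m^{1-\delta}$ is only $O(m^{2-\delta})$ rationals, while for $q\in(m^{1-\delta},m]$ one has $L(p/q)<n$ only when $\log q_{L}/L$ lies far below $g$, a deviation of order $\sqrt{\log q}$ many standard deviations, whose probability is $O(q^{-c\delta^2})$ by the reasoning of the previous paragraph; summing over $q\le m$ gives $|\Gamma'|=O(m^{2-c\delta^2})=o(m^2/\log m)$. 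The genuinely delicate point throughout is obtaining the \emph{rate} ($O(1/n)$ for $E_1(n),E_2(n)$, and the companion polynomial saving for $\Gamma'$): a soft ergodic argument as in Adler--Keane--Smorodinsky only gives $\lambda(E_i(n))=o(1)$, whereas we need the explicit decay, which forces the use of exponential mixing and a variance estimate for $S_n(-\log x)$—exactly the computation Philipp treats cavalierly. Everything else, namely the matrix/concatenation identity and the cylinder bookkeeping, is routine.
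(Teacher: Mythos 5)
Your decomposition $\Gamma=\Gamma'\cup\Gamma''$ and the treatment of $\Gamma''$ are genuinely different from, and in one respect cleaner than, the paper's argument. The paper never bounds the bad rationals directly: it instead counts the \emph{good} rationals (those lying in rank-$n$ cylinders that satisfy the deviation bounds) and shows this count is $\frac{3}{\pi^2}m^2+O(m^2/\log m)$, then subtracts from the total $\frac{3}{\pi^2}m^2+O(m\log m)$; the short rationals are then swept up for free, since they lie in no rank-$n$ cylinder at all. That strategy requires the precise asymptotic $\frac{3}{\pi^2}\frac{m^2}{q_n(q_n+q_{n-1})}(1+O(m^{-\delta/10}))$ for the rational count in each good cylinder, and in order to keep the error term from swamping the main term the paper has to impose an \emph{extra} condition $|\log q_{n-1}/(n-1)-g|\le\delta/12$ in the definition of the good cylinders $\Gamma'_m$; the paper explicitly flags this obstacle. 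Your $\Gamma''$ bound avoids all of that: the crude one-sided bound $O\bigl((m/q_n)^2\bigr)$ holds uniformly over cylinders, and since $q_{n-1}<q_n$ gives $q_n^{-2}\le 2\lambda(C)$, summing over bad cylinders converts directly to $m^2\lambda(E_1(n)\cup E_2(n))=O(m^2/n)$. This is correct and arguably simpler than the paper's route for the long part.

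The gap is in $\Gamma'$. You claim that for $q\in(m^{1-\delta},m]$, the proportion of $p$ with $L(p/q)<n$ is $O(q^{-c\delta^2})$ ``by the reasoning of the previous paragraph.'' But the previous paragraph is a second-moment/Chebyshev argument, which only yields a bound of order $O\bigl(1/(\delta^2\log q)\bigr)$ for the \emph{measure} of the deviation set; an exponential saving $q^{-c\delta^2}$ is a genuine large-deviation estimate for the Birkhoff sum of $-\log x$, which requires strictly stronger input (higher moments, moment generating functions, or the transfer-operator machinery behind Hensley/Baladi--Vall\'ee type local limit theorems). It is not provided by Proposition~\ref{prop:gooddenom} or anything derived from Lemma~\ref{lem:Philipp}. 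Moreover, even if one settles for the Chebyshev rate $O(1/\log q)$, you have a measure bound, not a count of rationals, and the natural attempt to convert (each short $r$ of expansion length $\ell$ determines a rank-$\ell$ cylinder of Lebesgue measure $\ge\frac{1}{2m^2}$; cylinders are disjoint only for fixed $\ell$) costs an extra factor of roughly $n\asymp\log m$ from summing over the $O(\log m)$ possible lengths $\ell<n$, destroying the saving. So either a true large-deviation theorem must be imported (changing the character of the paper's toolkit), or the short rationals must be handled by the paper's ``count the good ones'' device. A minor point: for $q>m^{1-\delta}$ and $L<n$, the ratio $\log q/L$ lies far \emph{above} $g$, not below.
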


To prove this we will need a series of lemmas, whose proofs will take up the bulk of this section.

\begin{lem}\label{lem:Philipp}
Let $C_s$ be a cylinder set of rank $k\ge 1$ and let $F$ be any measurable subset of $[0,1)$. Then for $n\ge 0$
\[
\mu(C_s\cap T^{-n-k}F )= \mu(C_s)\mu(F)(1+O(\tau^{\sqrt{n}}))
\]
where $0<\tau<1$ is a fixed constant and the implicit constant is uniform over all $s$ and $F$.
\end{lem}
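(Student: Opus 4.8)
The plan is to reduce this mixing estimate for the Gauss map to a single application of a quantitative Gauss--Kuzmin--L\'evy theorem, handling the dependence on the cylinder $C_s$ by an explicit computation and the dependence on $F$ by duality. Write $s=[d_1,\dots,d_k]$, let $p_k/q_k=\langle d_1,\dots,d_k\rangle$ with predecessor convergent $p_{k-1}/q_{k-1}$, and let $\psi_s(y)=(p_k+yp_{k-1})/(q_k+yq_{k-1})$, so that (by the standard facts recalled in the excerpt) $\psi_s$ maps $[0,1)$ homeomorphically onto $C_s$ (up to an endpoint) with $T^k\circ\psi_s=\mathrm{id}$ and $|\psi_s'(y)|=(q_k+yq_{k-1})^{-2}$ by $|p_kq_{k-1}-p_{k-1}q_k|=1$. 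Define $\nu_s$ to be the push-forward of $\mu|_{C_s}/\mu(C_s)$ under $T^k$, so that tautologically $\mu(C_s\cap T^{-k}A)=\mu(C_s)\,\nu_s(A)$ for every measurable $A\subseteq[0,1)$. The substance is that $\nu_s$ has an explicit Lebesgue density: the change of variables $x=\psi_s(y)$ applied to $d\mu=dx/((1+x)\log 2)$ gives
\[
\frac{d\nu_s}{d\lambda}(y)=g_s(y):=\frac{1}{\mu(C_s)\log 2}\cdot\frac{1}{(q_k+yq_{k-1})\bigl((p_k+q_k)+y(p_{k-1}+q_{k-1})\bigr)}.
\]

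The crucial point is that $g_s$ is bounded above and below, and has derivative bounded above, by absolute constants independent of $s$ and of $k$. This is a consequence of elementary facts about the convergents: for $0\le y<1$ one has $q_k\le q_k+yq_{k-1}<2q_k$ and $q_k\le (p_k+q_k)+y(p_{k-1}+q_{k-1})<4q_k$ (using $p_i<q_i$ and $q_{i-1}\le q_i$), while $\mu(C_s)\asymp\lambda(C_s)=q_k^{-1}(q_k+q_{k-1})^{-1}\asymp q_k^{-2}$ with absolute implied constants, since the Gauss density lies in $[\tfrac{1}{2\log 2},\tfrac{1}{\log 2}]$. Hence $g_s\asymp 1$, and differentiating $g_s$ reproduces the same expression multiplied by a sum of two terms each $O(1)$, so $g_s'=O(g_s)=O(1)$. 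Equivalently, $\phi_s:=d\nu_s/d\mu=(1+y)(\log 2)\,g_s(y)$ is a nonnegative function on $[0,1)$ lying in a fixed bounded ball of $C^1[0,1)$, bounded below by a positive absolute constant, with $\int\phi_s\,d\mu=1$.

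Let $\widehat P$ be the transfer operator of $T$ normalized so that $\widehat P1=1$, i.e.\ $\int(\widehat P\phi)\psi\,d\mu=\int\phi\,(\psi\circ T)\,d\mu$ for all $\psi\in L^\infty$. Since $d\nu_s=\phi_s\,d\mu$ and $C_s\cap T^{-n-k}F=C_s\cap T^{-k}(T^{-n}F)$, iterating this duality gives, for every measurable $F$ and every $n\ge 0$,
\[
\mu(C_s\cap T^{-n-k}F)=\mu(C_s)\,\nu_s(T^{-n}F)=\mu(C_s)\int_F\widehat P^{\,n}\phi_s\,d\mu.
\]
Now invoke the classical quantitative Gauss--Kuzmin--L\'evy theorem: $\widehat P$ has a spectral gap on $C^1[0,1)$ (or on functions of bounded variation), with simple leading eigenvalue $1$ and constant leading eigenfunction, so $\|\widehat P^{\,n}\phi-\int\phi\,d\mu\|_\infty=O\bigl(\theta^n\|\phi\|_{C^1}\bigr)$ for some fixed $\theta\in(0,1)$ and all $n\ge 0$. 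Combined with the uniform bound on $\|\phi_s\|_{C^1}$ and $\int\phi_s\,d\mu=1$, this yields $\|\widehat P^{\,n}\phi_s-1\|_\infty=O(\theta^n)$ with an implied constant independent of $s$; here the positivity of $\widehat P$ and $\widehat P1=1$ also give the crude bound $\|\widehat P^{\,n}\phi_s\|_\infty\le\|\phi_s\|_\infty=O(1)$, covering small $n$. Hence $\int_F\widehat P^{\,n}\phi_s\,d\mu=\mu(F)\bigl(1+O(\theta^n)\bigr)$, so $\mu(C_s\cap T^{-n-k}F)=\mu(C_s)\mu(F)\bigl(1+O(\theta^n)\bigr)$ with an implied constant independent of $s$ and $F$. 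Finally, $\theta^n=O(\tau^{\sqrt n})$ for every fixed $\tau\in(0,1)$ (the finitely many $n$ with $\theta^n>\tau^{\sqrt n}$ are absorbed into the implied constant), which gives the lemma, e.g.\ with $\tau=\sqrt\theta$.

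The main obstacle is the uniformity asserted in the second paragraph: one must verify that all the implied constants --- in the bounds for $g_s$, $g_s'$, and hence $\|\phi_s\|_{C^1}$ --- are independent of both $s$ and the rank $k$, which is precisely what the explicit formula for $g_s$ combined with $q_k\asymp q_k+yq_{k-1}\asymp (p_k+q_k)+y(p_{k-1}+q_{k-1})$ delivers. A secondary question is whether to cite the Gauss--Kuzmin--L\'evy theorem as a black box or to reproduce the elementary Kuzmin iteration, tracking the sup- and Lipschitz-norms of $\widehat P^{\,n}\phi_s-1$ through the recursion $\widehat P\phi(y)=(1+y)\sum_{j\ge 1}\phi\bigl(1/(j+y)\bigr)\big/\bigl((j+y)(j+1+y)\bigr)$; this is the route of Philipp \cite{Philipp}, and it explains why the statement records only the rate $\tau^{\sqrt n}$, which we keep for fidelity even though the spectral gap in fact yields the stronger geometric rate $\theta^n$.
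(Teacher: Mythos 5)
The paper does not prove this lemma; it states ``This is just Lemma 2 in \cite{Philipp}, so we omit the proof,'' so there is no in-paper argument to compare against. Your reconstruction is correct and follows the standard route, which is also, in essence, Philipp's: one realizes the conditional measure $\mu(\cdot\mid C_s)$ pushed forward by $T^k$ as an absolutely continuous probability measure on $[0,1)$ whose density $\phi_s$ with respect to $\mu$ is bounded above and below, and has bounded derivative, uniformly in $s$ and in the rank $k$; then one appeals to exponential equidistribution under the normalized Gauss--Kuzmin transfer operator. Your explicit formula for $g_s=d\nu_s/d\lambda$ is right, and the uniform bounds $g_s\asymp 1$ and $g_s'=O(1)$ follow cleanly from $q_{k-1}\le q_k$, $p_k\le q_k$, $p_{k-1}\le p_k$, and $\mu(C_s)\asymp\lambda(C_s)=q_k^{-1}(q_k+q_{k-1})^{-1}$, exactly as you say; the duality identity $\nu_s(T^{-n}F)=\int_F\widehat P^{\,n}\phi_s\,d\mu$ is also sound and applies to any Lebesgue-measurable $F$, since $1_F\in L^\infty$. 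The only presentational difference from Philipp is the one you flag yourself: Philipp runs the elementary Kuzmin iteration directly, which produces the $\tau^{\sqrt n}$ rate recorded in the lemma, whereas invoking the Gauss--Kuzmin--L\'evy--Wirsing spectral gap as a black box gives the stronger geometric rate $\theta^n$, from which $\tau^{\sqrt n}$ follows trivially since $\theta^n\le\theta^{\sqrt n}$ for $n\ge1$. Either route is acceptable; yours buys a shorter argument at the cost of citing a slightly heavier classical theorem, which is a perfectly reasonable trade given that the paper itself was content to cite Philipp wholesale.
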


This is just Lemma 2 in \cite{Philipp}, so we omit the proof.

Since the implicit constant in Lemma \ref{lem:Philipp} is uniform, it is clear that we may replace $C_s$ by any disjoint union of rank $k$ cylinders, and the statement would still be true.

\begin{prop}\label{prop:goodcount}
Let $s=[d_1,d_2,\dots,d_k]$ be a string and let $A_s(N;x)$ again 
denote the number of times this string occurs starting in the first $N$ positions of a real number $x$. 

Let $E_{\epsilon,s,N}$ denote the set of $x\in[0,1)$ such that
\[
|A_s(N;x)-\mu(C_s)N | >\epsilon \mu(C_s) N
\]
Then
\[
\mu(E_{\epsilon,s,N}) =  O\left( \frac{1}{\epsilon^2 \mu(C_s) N}\right)
\]
where the implicit constant is uniform over all $\epsilon$, $s$, and $N$, but may  depend on $k$. Moreover, $E_{\epsilon,s,N}$ can be expressed as the union of rank $N+k-1$ cylinders.
\end{prop}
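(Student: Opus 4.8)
The plan is a second-moment estimate followed by Chebyshev's inequality, with the decay of correlations supplied by Lemma~\ref{lem:Philipp}. First I would put the counting function into ergodic-sum form: the string $s$ occurs starting at position $i$ of $x$ (for $1\le i\le N$) exactly when $a_i(x)=d_1,\dots,a_{i+k-1}(x)=d_k$, i.e.\ when $T^{i-1}x\in C_s$, so $A_s(N;x)=\sum_{j=0}^{N-1}\mathbf{1}_{C_s}(T^jx)$. Putting $f=\mathbf{1}_{C_s}-\mu(C_s)$, a mean-zero function for $\mu$, and $S_Nf(x)=\sum_{j=0}^{N-1}f(T^jx)=A_s(N;x)-\mu(C_s)N$, we have $E_{\epsilon,s,N}=\{x:|S_Nf(x)|>\epsilon\mu(C_s)N\}$, and it suffices to bound the variance $\|S_Nf\|_{L^2(\mu)}^2$ by $O(N\mu(C_s))$ with an implicit constant depending only on $k$.

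Next I would expand $\|S_Nf\|_2^2=\sum_{0\le j,l<N}\int f(T^jx)f(T^lx)\,d\mu$. By $T$-invariance of $\mu$ the term with $|j-l|=d$ equals the correlation $c_d:=\int f(x)f(T^dx)\,d\mu$, so $\|S_Nf\|_2^2=Nc_0+2\sum_{d=1}^{N-1}(N-d)c_d$, and I would estimate $c_d$ in three ranges. For $d=0$, $c_0=\mu(C_s)-\mu(C_s)^2\le\mu(C_s)$. For $1\le d<k$, Cauchy--Schwarz together with $\int(f\circ T^d)^2\,d\mu=\int f^2\,d\mu$ gives $|c_d|\le\|f\|_2^2\le\mu(C_s)$, and there are only $k-1$ such $d$. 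For $d\ge k$, expanding gives $c_d=\mu(C_s\cap T^{-d}C_s)-\mu(C_s)^2$, and Lemma~\ref{lem:Philipp} applied with $F=C_s$ and $n=d-k$ yields $\mu(C_s\cap T^{-d}C_s)=\mu(C_s)^2(1+O(\tau^{\sqrt{d-k}}))$, hence $|c_d|=\mu(C_s)^2\cdot O(\tau^{\sqrt{d-k}})$ with the implicit constant independent of $s$. Since $\sum_{e\ge0}\tau^{\sqrt e}<\infty$, summing over $d$ gives $\|S_Nf\|_2^2\le N\mu(C_s)+2(k-1)N\mu(C_s)+2N\mu(C_s)^2\sum_{e\ge0}O(\tau^{\sqrt e})=O(N\mu(C_s))$, where the constant depends only on $k$ and we used $\mu(C_s)^2\le\mu(C_s)$.

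Chebyshev's inequality then gives $\mu(E_{\epsilon,s,N})\le\|S_Nf\|_2^2/(\epsilon\mu(C_s)N)^2=O(1/(\epsilon^2\mu(C_s)N))$, and since every estimate above was uniform in $\epsilon$, $s$, and $N$ (we used only that the constant in Lemma~\ref{lem:Philipp} is uniform over $s$ and that $\mu(C_s)\le1$), so is the final bound. For the last assertion, observe that whether $s$ starts at a position $i\le N$ depends only on $a_i(x),\dots,a_{i+k-1}(x)$, all of index at most $N+k-1$; thus $A_s(N;\cdot)$ is constant on every rank-$(N+k-1)$ cylinder, so $E_{\epsilon,s,N}$ is a union of such cylinders (ignoring, if one wishes, the Lebesgue-null set of rationals whose expansion terminates before position $N+k-1$).

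I do not expect a genuine obstacle here: given Lemma~\ref{lem:Philipp} this is the classical variance bound. The only care needed is bookkeeping---checking that the error term from Lemma~\ref{lem:Philipp} is summable in $d$, that the near-diagonal range $1\le d<k$ contributes only $O_k(N\mu(C_s))$ rather than something larger, and that no implicit constant secretly depends on $\epsilon$, $s$, or $N$---together with the essentially cosmetic question of whether the ``union of cylinders'' claim is meant exactly or up to a null set, which is irrelevant for the later applications.
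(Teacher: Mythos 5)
Your proposal is correct and follows essentially the same path as the paper: a second-moment estimate split into a diagonal piece, a near-diagonal piece of width $O(k)$, and a far-off-diagonal piece controlled by Lemma~\ref{lem:Philipp}, followed by Chebyshev; the paper just carries the $\mu(C_s)^2 N^2$ term along rather than centering the indicator first, which is a cosmetic difference. The final observation about rank-$(N+k-1)$ cylinders is also the paper's argument verbatim.
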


We note that $A_x(N;x)$ is only meaningful if $x$ has at least $N+k-1$ continued fraction digits. Thus rational numbers with fewer than that many digits are not included in $E_{\epsilon,s,N}$ by default.

The statement of the theorem, and the subsequent proof, are very similar to  Theorem 3 in \cite{Philipp}; however, we have made this statement uniform in $x$.

\begin{proof}
Let $I_s(x)$ denote the characteristic function of $C_s$. Since $T$ acts as a forward shift on the digits, we can write $A_s(N;x)=\sum_{n=0}^{N-1} I_s(T^n x)$. Also, since
\[
\int_0^1 I_s(T^n x) \ dx= \mu(T^{-n}C_s) = \mu(C_s),
\]
we have that
\[
\int_0^1 A_s(N;x) \ dx = \mu(C_s) N.
\]

Then we have,
\begin{align*}
\int_0^1 \left( A_s(N;x)-\mu(C_s)N \right)^2 \ d\mu&= \sum_{0\le i,j< N}\left( \int_0^1  I_s(T^i x) I_s(T^j x) \ d\mu \right) - \mu(C_s)^2 N^2\\
&=\Sigma_1+\Sigma_2+\Sigma_3-\mu(C_s)^2N^2,
\end{align*}
where $\Sigma_1$ is twice the sum running over $0\le i <j<N $ with $j-i\le k$, $\Sigma_2$ is twice the sum running over $0\le i<j<N$ with $j-i>k$, and $\Sigma_3$ is the sum running over $i=j$.

For each term in $\Sigma_1$, we have 
\[
\int_0^1  I_s(T^i x) I_s(T^j x) \ d\mu = \mu(T^{-i}C_s \cap T^{-j} C_s) \le \mu(T^{-i} C_s) = \mu(C_s),
\]
so the sum over all such $i$ and $j$ satisfying the conditions of $\Sigma_1$ is bounded by $2k \mu(C_s) N$. 

We bound $\Sigma_2$ using Lemma \ref{lem:Philipp}, noting that $T^{-i} C_s$ can be expressed as a disjoint union of rank $i+k$ cylinders:
\begin{align*}
&2\sum_{j< N} \sum_{0\le i<j-k} \int_0^1  I_s(T^i x) I_s(T^j x) \ d\mu\\
& \qquad= 2\sum_{j< N} \sum_{0\le i<j-k}\mu(T^{-i}C_s \cap T^{-j} C_s) \\
& \qquad= 2 \sum_{j< N} \sum_{0\le i<j-k} \mu(T^{-i}C_s) \mu(T^{-j}C_s)\left(1+O\left(\tau^{\sqrt{j-i-k}}\right)\right)\\
& \qquad= 2\sum_{j< N} \sum_{0\le i<j-k} \mu(C_s)^2 \left(1+O\left( \tau^{\sqrt{j-i-k}}\right)\right)\\
& \qquad= \mu(C_s)^2 N(N-2k+1)+ O\left( \mu(C_s)^2 \sum_{j\le N} \sum_{i<j-k} \tau^{\sqrt{j-i-k}} \right)\\
& \qquad= \mu(C_s)^2 N^2 + O\left( \mu(C_s)N\right),
\end{align*}
where this final step derives from the fact that $\mu(C_s)^2 \le \mu(C_s)$ and the following work, substituting the variable $\ell$ for $j-i$,
\begin{equation}\label{eq:qexpbound}
\sum_{j< N} \sum_{0\le i<j-k} \tau^{\sqrt{j-i-k}} = \sum_{k< \ell< N} (N-\ell )\tau^{\sqrt{\ell-k}} \le N  \sum_{k< \ell\le N} \tau^{\sqrt{\ell-k}} = O(N).
\end{equation}
Throughout this paragraph, the implicit constant is only dependent on $k$.

As the square of any characteristic function is itself, we have that $\Sigma_3$ is just $\mu(C_s) N$. 

By combining these estimates together, we have
\[
\int_0^1 \left( A_s(N;x)-\mu(C_s)N \right)^2 \ d\mu = O\left( \mu(C_s) N\right).
\]

Thus,
\[
\mu(E_{\epsilon,s,N})\le \int_{E_{\epsilon,s,N}}\dfrac{ \left( A_s(N;x)-\mu(C_s)N \right)^2}{\epsilon^2 \mu(C_s)^2 N^2} \ d\mu= O\left(\frac{1}{\epsilon^2 \mu(C_s)N} \right)
\]
which gives the desired result. The fact that $E_{\epsilon,s,N}$ can be written as a union of rank $N+k-1$ cylinders comes from the fact that the value of $A_s(N;x)$ depends only on which $N+k-1$ rank cylinder $x$ lies in.
\end{proof}

\begin{prop}\label{prop:gooddenom}
Let $q_n(x)$ denote the denominator of the $n$th convergent to $x$. Let $\epsilon>0$ be a fixed constant, and let $F_{\epsilon,N}$ denote the set of $x\in[0,1)$ such that
\[
\left|\frac{\log q_N(x)}{N} - g \right| > \epsilon .
\]

Then  $\mu(F_{\epsilon,N}) = O(1/N)$ with the implicit constant dependent only on $\epsilon$. Moreover, $F_{\epsilon,N}$ can be expressed as a disjoint union of rank $N$ cylinders.
\end{prop}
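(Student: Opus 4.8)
The plan is to run the same second-moment argument as in the proof of Proposition~\ref{prop:goodcount}, applied now to the Birkhoff sums of the (unbounded) observable $\phi(x)=-\log x$. The first step is to convert the statement about $\log q_N(x)$ into one about such a sum. Using the classical identity $\prod_{j=0}^{N-1}T^j x = |q_{N-1}(x)\,x-p_{N-1}(x)|$ together with the elementary bounds
\[
\frac{1}{q_N+q_{N-1}}\le |q_{N-1}x-p_{N-1}| \le \frac{1}{q_N},
\]
which come from \eqref{eq:endpoints} and $|p_nq_{n-1}-p_{n-1}q_n|=1$, one obtains on taking logarithms (and using $q_{N-1}\le q_N$) that
\[
\log q_N(x) = \sum_{j=0}^{N-1}\bigl(-\log T^j x\bigr) + O(1)
\]
with an absolute implicit constant. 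Since $g=\int_0^1\phi\,d\mu$ by \eqref{eq:gformulation}, it follows that for all $N$ larger than some $N_0(\epsilon)$ the set $F_{\epsilon,N}$ is contained in $\bigl\{\,x:\bigl|\tfrac1N\sum_{j=0}^{N-1}\phi(T^jx)-g\bigr|>\epsilon/2\,\bigr\}$, while for the finitely many $N\le N_0(\epsilon)$ the estimate $\mu(F_{\epsilon,N})=O(1/N)$ is trivial since $\mu(F_{\epsilon,N})\le 1$.

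By Chebyshev's inequality it then suffices to prove the variance estimate
\[
\int_0^1\Bigl(\sum_{j=0}^{N-1}\phi(T^jx)-gN\Bigr)^2 d\mu = O(N).
\]
Expanding the square and using the $T$-invariance of $\mu$, exactly as in the proof of Proposition~\ref{prop:goodcount}, one finds that the left-hand side equals
\[
N\int_0^1\phi^2\,d\mu - g^2N + 2\sum_{\ell=1}^{N-1}(N-\ell)c_\ell, \qquad c_\ell := \int_0^1\phi(x)\phi(T^\ell x)\,d\mu-g^2.
\]
The term $N\int_0^1\phi^2\,d\mu$ is $O(N)$ because $\int_0^1(\log x)^2\frac{dx}{1+x}<\infty$, so the whole argument reduces to a decay-of-correlations bound $|c_\ell|\le\psi(\ell)$ with $\sum_\ell\psi(\ell)<\infty$; once this is known, $\bigl|\sum_{\ell\le N}(N-\ell)c_\ell\bigr|\le N\sum_\ell|c_\ell|=O(N)$ and we are done, just as \eqref{eq:qexpbound} was used before.

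To get such a bound out of Lemma~\ref{lem:Philipp}, which is phrased for unions of cylinder sets rather than for $\phi$ itself, the plan is to sandwich $\phi$ between two nonnegative functions $\phi_r^-\le\phi\le\phi_r^+$ that are constant on each rank $r$ cylinder (take the infimum and supremum of $-\log x$ there). The oscillation of $-\log x$ on a rank $r$ cylinder $C$ is at most $\lambda(C)\sup_C(1/x)$, and since a rank $r$ cylinder with first partial quotient $a$ lies inside $(1/(a+1),1/a)$ while its length is $O(q_r^{-2})=O((aG^r)^{-1})$ by \eqref{eq:equation}, this oscillation is $O(G^{-r})$ uniformly; summing against $\mu$ gives $\int_0^1(\phi_r^+-\phi_r^-)\,d\mu=O(G^{-r})$, so $\int_0^1\phi_r^\pm\,d\mu=g+O(G^{-r})$. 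Expanding $\phi_r^\pm(x)\phi_r^\pm(T^\ell x)$ over pairs of rank $r$ cylinders and applying Lemma~\ref{lem:Philipp} to each term (with the rank $r$ cylinder playing the role of $C_s$ and $n=\ell-r$), one gets $\int_0^1\phi_r^\pm(x)\phi_r^\pm(T^\ell x)\,d\mu=(g+O(G^{-r}))^2(1+O(\tau^{\sqrt{\ell-r}}))$, and the sandwich yields $|c_\ell|\ll G^{-r}+\tau^{\sqrt{\ell-r}}$. Taking $r=\lfloor\ell/2\rfloor$ makes this $\ll\tau^{\sqrt{\ell/2}}$, which is summable, completing the argument.

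I expect the genuine obstacle to be the bookkeeping forced by the fact that $\phi(x)=-\log x$ is unbounded as $x\to 0$: one must check that the oscillation estimate, the finiteness of $\int_0^1\phi^2\,d\mu$, and the error analysis above are all genuinely controlled on the deep cylinders abutting the origin (equivalently, where the first partial quotient is large), and it is precisely these estimates --- rather than the mixing input of Lemma~\ref{lem:Philipp} --- that Philipp is said to ``gloss over.'' If the crude bound on $\sup_C(1/x)$ above turns out to be insufficient somewhere, a cleaner route is to truncate, working with $\min(\phi,\log M)$ and bounding the contribution of $\{a_1\ge M\}$ using $\mu\{a_1\ge M\}=O(1/M)$ and $\int_{\{a_1\ge M\}}(\log x)^2\,d\mu=O((\log M)^2/M)$, then optimizing $M$ against $\ell$. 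Finally, the structural claim is immediate: $q_N(x)=a_N(x)q_{N-1}+q_{N-2}$ depends only on $a_1(x),\dots,a_N(x)$, so $F_{\epsilon,N}$ is a union of rank $N$ cylinders (rationals with fewer than $N$ digits being excluded by default, as in the remark following Proposition~\ref{prop:goodcount}), and distinct rank $N$ cylinders are automatically pairwise disjoint.
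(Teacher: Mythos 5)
Your proof is correct, and it takes a genuinely different route from the paper's, though both ultimately rest on the same two ingredients: a second-moment Chebyshev argument and Lemma~\ref{lem:Philipp}. The difference lies in the decomposition of $\log q_N$ and the approximation scheme. The paper writes $\log q_N(x)=\sum_{i=1}^N\log f_i(x)$ with $f_i(x)=q_i(x)/q_{i-1}(x)=\langle a_i;a_{i-1},\dots,a_1\rangle$ (a ``backward-looking'' observable depending on the \emph{first} $i$ digits), approximates $f_i$ by $f_i^{(k)}=f_k\circ T^{i-k}$ with a \emph{fixed} truncation depth $k=k(\epsilon)$, and then splits the deviation into a boundary piece, a truncation error $O(2^{-k})$, and a centered Birkhoff sum of the bounded-depth observables. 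You instead use the orbit-product identity $\log q_N(x)=\sum_{j=0}^{N-1}(-\log T^j x)+O(1)$ (a ``forward-looking'' decomposition), work directly with the unbounded observable $\phi=-\log x$, and control the off-diagonal correlations $c_\ell$ by sandwiching $\phi$ between rank-$r$ cylinder step functions with a \emph{variable} truncation $r=\lfloor\ell/2\rfloor$ optimized against the lag. Your correlation bound $|c_\ell|\ll G^{-r}+\tau^{\sqrt{\ell-r}}$ after optimization gives a summable majorant, which is all the variance estimate needs. Your oscillation estimate is the right thing to check carefully, but it is correct as stated: with first digit $a$ one has $q_r^2\geq q_r q_1\gg aG^r$, so $\lambda(C)\cdot\sup_C(1/x)\ll (aG^r)^{-1}(a+1)=O(G^{-r})$ uniformly, including on the deep cylinders near $0$; and $\int_0^1\phi^2\,d\mu<\infty$ is immediate. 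Your approach is arguably closer to the classical treatment of the L\'evy constant and has a cleaner telescoping identity, at the cost of working throughout with an unbounded observable; the paper's approach front-loads the truncation so that afterwards only bounded objects $f_i^{(k)}$ appear in the key variance computation. Both are valid, and your final structural observation (that $q_N$ is a function of the first $N$ digits, so $F_{\epsilon,N}$ is a disjoint union of rank~$N$ cylinders) matches the paper exactly.
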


\begin{proof}
Let $f_n(x)$ be a function given by $\langle a_n(x);a_{n-1}(x),\dots,a_1(x)\rangle$ if $x=\langle a_1(x),a_2(x),\dots\rangle$ is irrational and $f_n(x)=0$ if $x$ is rational. It follows that $f_n(x) = q_n(x) / q_{n-1}(x)$ if $x$ is irrational. We always take $q_0(x)=1$. Thus, we have that
\begin{equation}\label{eq:qNtofi}
\frac{\log q_N(x)}{N} = \frac{1}{N} \sum_{i=1}^N \log f_i(x).
\end{equation}

We will also define $f_n^{(k)}(x) = f_k(T^{n-k} x)$ when $n \ge k$. For irrational $x$, the function $f_n^{(k)}$ truncates $f_n$ after $k$ digits, thus we expect it to be a good approximation to $f_n$. In fact, we have that
\[
\left| \log f_n(x) - \log f_n^{(k)}(x) \right| \le |f_n(x) - f_n^{(k)}(x) | ,
\]
but (ignoring the integer parts, which cancel) this is just the distance between two points in the same rank $k-1$ cylinder set, which, as we mentioned at the start of this section, is at most $O(2^{-k})$, thus
\begin{equation}\label{eq:philippeq1}
\left| \log f_n(x) - \log f_n^{(k)}(x) \right| = O(2^{-k})
\end{equation}
for all irrational $x\in [0,1)$.

We  let 
\[
\lambda_k = \int_0^1 \log f_k(x) \ d \mu.
\]
Clearly $f_k(x)$ is fixed on any rank $k$ cylinder. In fact, let us define a function on strings $s=[d_1,d_2,\dots,d_k]$ given by $g(s)= \log(\langle d_k; d_{k-1},\dots,d_1\rangle)$. Then if $x\in C_s$ with $s$ having length $k$, then $f_k(x) = g(s)$. From the start of this section, we saw that if $s$ is a string and $\overline{s}$ is this string in reverse, then $\mu(C_s)=\mu(C_{\overline{s}})$.  These facts give the following:
\begin{align*}
\lambda_k &= \int_0^1 \log( \langle a_k(x); a_{k-1}(x) \dots, a_1(x) \rangle ) \ d\mu(x) =\sum_{s} g(s) \mu(C_s)\\
&= \sum_s g(s)\mu(C_{\overline{s}}) = \int_0^1 \log ( \langle a_1(x) ; a_2(x), \dots a_k(x) \rangle) \ d\mu(x),
\end{align*}
where here the sums over $s$ always go over all strings $s$ with $k$ digits.
However the point \[ \frac{1}{x} - a_1(x)\]  and the point $\langle  a_2(x),\dots a_k(x)\rangle$ belong to the same rank $k-1$ cylinder set, thus, by our earlier argument, are within $O(2^{-k})$ of each other. Thus, by \eqref{eq:gformulation} we have
\begin{align*}
\lambda_k &= \int_0^1 \log 1/x \ d\mu(x) +  \int_0^1  \log \langle a_1(x);a_2(x),\dots,a_k(x) \rangle - \log 1/x \ d\mu(x) \\
 &=  g +  O\left( \int_0^1\left| \frac{1}{x} - \langle a_1(x);a_2(x), \dots, a_k(x) \rangle\right| \ d\mu(x) \right) \\
 &=  g +  O\left( \int_0^1 2^{-k} \ d\mu(x) \right) \\
&= g + O(2^{-k}).
\end{align*}


Now we let $N=n+k$ for some choice of positive integers $n$ and $k$ to be made later. (We note, for clarity, that the $k$ and $n$ here do not relate to the length of any cylinder set, nor to the constant $n_\delta$ defined earlier. They are wholly separate variables.) The assumption will be that $k$ is fixed so that as $N$ varies, so does $n$. Then, by using \eqref{eq:qNtofi}, \eqref{eq:philippeq1}, and the last part of the previous paragraph, 
 we have
\begin{align*}
&\left| \frac{\log q_N(x)}{N} - g \right| =\left| \frac{1}{N} \sum_{i=1}^N \log f_i(x) - g\right| \\
&\qquad < \frac{N}{n} \left| \frac{1}{N} \sum_{i=1}^N \log f_i(x) - g\right| \le \frac{k}{n} g +\left| \frac{1}{n} \sum_{i=1}^N \log f_i(x) -g\right|\\
& \qquad < \frac{k}{n} g + \left| \frac{1}{n} \sum_{i=1}^k \log f_i(x) \right| + \frac{1}{n} \sum_{i=k+1}^{n+k} \left| \log f_i(x)-\log f_i^{(k)}(x)\right| \\
&\qquad \qquad + \left| \frac{1}{n} \sum_{i=k+1}^{n+k} \log f_i^{(k)}(x) - \lambda_k \right| + \left| \lambda_k -g\right|\\
&\qquad = O\left( \frac{k}{n}\right) + \left| \frac{1}{n} \sum_{i=1}^k \log f_i(x) \right|  + \left| \frac{1}{n} \sum_{i=k+1}^{n+k} \log f_i^{(k)}(x) - \lambda_k \right|  + O\left( 2^{-k} \right). 
\end{align*}
If we suppose that $k$ is a fixed sufficiently large integer so that the $O(2^{-k})$ term will be less than $\epsilon/4$, then the desired result will clearly hold provided we can show that the set of $x$ for which
\begin{equation}\label{eq:firstineq}
 \left| \frac{1}{n} \sum_{i=1}^k \log f_i(x) \right| > \frac{\epsilon}{4}
\end{equation}
or
\begin{equation}\label{eq:secondineq}
 \left| \frac{1}{n} \sum_{i=k+1}^{n+k} \log f_i^{(k)}(x) - \lambda_k \right| > \frac{\epsilon}{4}
\end{equation}
has size at most $O(1/N)$ for sufficiently large $N$. In fact, since we have assumed $k$ is fixed, dependent on $\epsilon$, it suffices to show that the set has size $O(1/n)$.

We will need that for any non-negative integers $i,j$ we have 
\begin{equation}\label{eq:twotermbound}
\int_0^1 \log f_i(x) \log f_j(x) \ d \mu \le M
\end{equation}
for some uniform constant $M$. To see this, first note that $f_i(x) \le a_i(x)+1$. We also have for any fixed integer $a$ that
\[
\mu(C_{[a]}) = \frac{1}{\log 2} \log \left( 1+ \frac{1}{a(a+2)} \right) \le \frac{L}{a^2}
\]
for some sufficiently large, uniform constant $L$. Therefore, by Lemma \ref{lem:Philipp},  if $i\neq j$ we have
\begin{align*}
\int_0^1 \log f_i(x) \log f_j(x) \ d\mu &\le \sum_{a,b=1}^\infty \log (a+1)\log(b+1) \mu(\{x\in[0,1): a_i(x)=a,a_j(x)=b\})\\
&= \sum_{a,b=1}^\infty \log(a+1)\log (b+1)\mu(T^{-i} C_{[a]} \cap T^{-j} C_{[b]})\\
&\le \left( \sum_{a,b=1}^\infty \frac{{L}^2\log(a+1)\log(b+1)}{a^2b^2}\right) \left(1+O(q^{\sqrt{|j-i|-1}})\right),
\end{align*}
and this is uniformly bounded as the sum here converges. Likewise, if $i=j$, we get
\begin{align*}
\int_0^1 \log f_i(x) \log f_j(x) \ d\mu \le \sum_{a=1}^\infty \frac{{L}^2\log(a+1)^2}{a^2},
\end{align*}
which is also uniformly bounded.

By using \eqref{eq:twotermbound}, we have
\[
\int_0^1\left( \frac{1}{n} \sum_{i=1}^k \log f_i(x) \right)^2 \ d\mu= O\left( \frac{k^2}{n^2} \right)
\]
So by a similar argument to the final step in the proof of Proposition \ref{prop:goodcount}, we have that \eqref{eq:firstineq} holds on a set of $\mu$-measure $O(k^2/n^2\epsilon^2)=O_{\epsilon}(1/n^2)$.

Proving that \eqref{eq:secondineq} holds on a set of the desired size shall require a few more steps and take up the remainder of the proof. 

First recall that $f_i^{(k)}(x) = f_k(T^{i-k} x)$. Thus we have that $f_i^{(k)}(x) = g(s)$ if and only if $x\in T^{-(i-k)}C_s$ and $s$ has length $k$. Since $T$ preserves $\mu$, we have
\[
\int_0^1 \log f_i^{(k)}(x)  \ d\mu =\sum_s g(s) \mu(T^{-(i-k)}C_s)= \sum_s g(s) \mu(C_s) = \int_0^1 \log f_k(x) \ d \mu = \lambda_k,
\]
where here again the sums run over all strings $s$ with length $k$.

For $k< i \le j$, let $I(i,j)$ denote
\[
\int_0^1 \log f_i^{(k)}(x) \log f_j^{(k)}(x) \ d\mu.
\]
If $j-i\le k$, then we use Cauchy-Schwarz and \eqref{eq:twotermbound} to bound $I(i,j)$ as follows:
\begin{align*}
I(i,j)&\le  \left( \int_0^1(\log  f_i^{(k)}(x))^2 \ d \mu\right)^{1/2}  \left( \int_0^1 (\log f_j^{(k)} (x))^2\ d \mu\right)^{1/2}\\ 
&= \left( \int_0^1(\log  f_k(x))^2 \ d \mu\right)^{1/2}  \left( \int_0^1 (\log f_k (x))^2\ d \mu\right)^{1/2}\\
&\le C.
\end{align*}
If $j-i>k$, then
\[
I(i,j) = \sum_{s,s'} g(s)\cdot g(s') \cdot \mu(T^{-i} C_s\cap T^{-j}C_{s'})= \sum_{s,s'} g(s)\cdot g(s') \cdot \mu(C_s\cap T^{-(j-i)}C_{s'})
\]
where the sum runs over all $s$ and $s'$ with length $k$. 
By applying Lemma \ref{lem:Philipp} we obtain
\begin{align*}
I(i,j) &=\left( \sum_{s} g(s) \mu(C_s)\right)\left( \sum_{s'} g(s') \mu(C_{s'})\right) \left(1+O\left(\tau^{\sqrt{j-i-k}}\right)\right)\\
&=\left( \int_0^1 \log f_k(x) \ d\mu\right)\left( \int_0^1 \log f_k(x) \ d\mu\right)\cdot  \left(1+O\left(\tau^{\sqrt{j-i-k}}\right)\right)\\
&= \lambda_k^2  \left(1+O\left(\tau^{\sqrt{j-i-k}}\right)\right).
\end{align*}
Similar bounds on $I(i,j)$ hold when $j< i$. 

Thus, by making use of our bound on $I(i,j)$ and \eqref{eq:qexpbound}, we have
\begin{align*}
\int_0^1 \left( \frac{1}{n} \sum_{i=k+1}^{n+k} \log f_i^{(k)}(x) - \lambda_k\right)^2 d \mu& =\frac{1}{n^2} \sum_{i,j= k+1}^n \left( I(i,j)-\lambda_k^2\right)\\
& =  O\left( \frac{1}{n^2}\lambda_k^2\sum_{\substack{i,j=k+1\\j-i>k}}^n q^{\sqrt{j-i-k}}\right) +O\left( \frac{k}{n} \right)\\
&  = O\left( \frac{k}{n}\right).
\end{align*}
Therefore, \eqref{eq:secondineq} holds on a set of size $O(k/n\epsilon^2)$, as desired.
\end{proof}

Now we prove the main result of this section.

\begin{proof}[Proof of Proposition \ref{prop:mainestimate}] 

As in the definition of $\Gamma_{m,\delta,s,\eta}$, we let $n= \lfloor (1-2\delta)(\log m)/g\rfloor$.

Let $\Gamma'_m=\Gamma'_{m,\delta,s,\eta}$ denote the union of cylinder sets $C_\frak{s}$, where the strings $\frak{s}=[a_1,a_2,\dots,\\ a_n]$ have length $n$ and satisfy
$|\frac{\log q_n}{n} - g|\le\delta$---where here and in the sequel $q_n$ is the denominator of the rational number $\langle a_1,a_2,\dots,a_n\rangle $ corresponding to $\frak{s}$---$|\frac{\log q_{n-1}}{n-1} - g|\le\delta/12$, and
\[
\left| \frac{\#\{0 \le i \le n-k: [a_{i+1},a_{i+2},\dots,a_{i+k}]=s\}}{n}- \mu(C_\frak{s}) \right| \le\eta.
\]
Note that with these conditions, we have
\[
e^{(n-1)(g-\delta/12)} \le q_{n-1} < q_n \le e^{n(g+\delta)},
\]
which can, after some algebraic simplification and simple estimations, be shown to imply that
\begin{equation}\label{eq:qnboundedbym}
m^{(1-2\delta)(1-\delta/12g)}\cdot e^{-2g} \le q_{n-1} < q_n \le m^{(1-2\delta)(1+\delta/g)}.
\end{equation}

We pause to emphasize the differences between $\Gamma_{m,\delta,s,\eta}$ and $\Gamma'_m$. The set $\Gamma_{m,\delta,s,\eta}$ is a finite set of rational numbers all of which do not exhibit the expected ergodic behavior, while $\Gamma_{m,\delta,s,\eta}$ is an infinite set of rational and irrational numbers, all of which do exhibit expected ergodic behavior in their first $n$ digits. In fact, if $r$ is a rational number in $\Gamma_{m,\delta,s,\eta}$, then $r \in {\Gamma'_m}^{c}$.

By Propositions \ref{prop:goodcount} and \ref{prop:gooddenom}, we know that $\mu( {\Gamma'_m}^c )= O(1/n) = O(1/\log m)$, where here we now assume that the implicit constant is dependent on the various variables, $\delta$, $\eta$, and $s$. Since the densities of the Gauss measure and Lebesgue measure are within a constant multiple of one another, this statement is also true for the Lebesgue measure. Recall that the Lebesgue measure of the set $C_\frak{s}$ is given by $1/q_n(q_n+q_{n-1})$. Thus
\[
\lambda( {\Gamma'}_{m} ) = \sum_{C_\frak{s}} \frac{1}{q_n(q_n+q_{n-1})},
\]
where the sum runs over all the rank $n$ cylinder sets whose union is $\Gamma'_m$.

Suppose $C_\frak{s}$ is one such cylinder set. How many fractions in lowest terms with denimonator at most $m$ are in this set? Recall that if $\frak{s}=[a_1,a_2,\dots,a_n]$, then any rational number whose continued fraction expansion is given by $\langle a_1,a_2,\dots,a_n,a_{n+1},\dots,a_\ell\rangle$ is equal to 
\[
\frac{u p_n+vp_{n-1}}{uq_n+vq_{n-1}},
\]
where $p_n/q_n= \langle a_1,\dots,a_n\rangle $, $p_{n-1}/q_{n-1}=\langle a_1,\dots,a_{n-1}\rangle $, and $v/u=\langle a_{n+1},a_{n+2},\dots,a_\ell\rangle$ with $1\le v<u$. (Note: our assumption that we always consider finite continued fractions whose last digit is $1$ removes the case where $u=v=1$ from consideration.) Thus we want an estimation on the sum
\[
\sum_{\substack{uq_n+vq_{n-1}\le m\\ 1 \le v < u, \ (u,v)=1}} 1.
\]

To do this we will require a few estimations, which may be found in Chapters 1 and 2 of \cite{Sandor}. We are not using the strongest form of the estimation, merely the strongest form we need.
\begin{align*}
\sum_{\ell\le x} \frac{\phi(\ell)}{\ell}  &= \frac{6}{\pi^2} x + O\left( \log x \right)\\
\sum_{\ell\le x} \phi(\ell) &= \frac{3}{\pi^2} x^2 + O\left( x \log x\right)\\
\sum_{\ell\le x} d(\ell) &= O\left(x  \log x\right)
\end{align*}
In addition, we require the following result:
\[
\sum_{\substack{\ell \le x \\ (\ell,m)=1}} 1 = \sum_{d | m} \mu(d) \left\lfloor \frac{x}{d} \right\rfloor =x\sum_{d|m} \frac{\mu(d)}{d} + O(d(m))= \frac{\phi(m)}{m} x + O(d(m)).
\]

Thus we estimate the sum in the following way:
\begin{align*}
\sum_{\substack{uq_n+vq_{n-1}\le m\\ 1 \le v < u, \ (u,v)=1}} 1 &= \sum_{u\le m/q_n} \left( \sum_{\substack{v\le \min(u, (m-uq_n)/q_{n-1})\\(u,v)=1  }} 1\right)\\
&= \sum_{u\le m/q_n} \left( \frac{\phi(u)}{u} \cdot \min\left( u, \frac{m-uq_n}{q_{n-1}} \right) +O\left( d(u)\right)\right)\\
&= \sum_{u\le m/(q_n+q_{n-1})} \phi(u) + \sum_{m/(q_n+q_{n-1})<u\le m/q_n} \frac{\phi(u)}{u}\cdot \frac{m}{q_{n-1}}\\ &\qquad  -  \sum_{m/(q_n+q_{n-1})<u\le m/q_n}\frac{q_n}{q_{n-1}} \cdot \phi(u)   + O\left( \sum_{u\le m/q_n} d(u)\right)\\
&= \frac{3}{\pi^2} \left( \frac{m}{q_n+q_{n-1}}\right)^2 +O\left( \frac{m}{q_n+q_{n-1}}\log \left( \frac{m}{q_n+q_{n-1}}\right)\right) \\
&\qquad + \frac{6}{\pi^2} \cdot \frac{m}{q_{n-1}}\left( \frac{m}{q_n}-\frac{m}{q_n+q_{n-1}}\right)+ O\left( \frac{m}{q_{n-1}} \log \left( \frac{m}{q_n}\right)\right)\\
&\qquad - \frac{3}{\pi^2} \frac{q_n}{q_{n-1}} \left( \left( \frac{m}{q_n}\right)^2 -\left( \frac{m}{q_n+q_{n-1}}\right)^2  \right) +O\left( \frac{m}{q_{n-1}} \log\left( \frac{m}{q_n}\right)\right)\\
&\qquad + O \left( \frac{m}{q_n}\log \left( \frac{m}{q_n}\right)\right)\\
&= \frac{3}{\pi^2} \cdot \frac{m^2}{q_n(q_n+q_{n-1})} +O\left( \frac{m}{q_{n-1}} \log\left( \frac{m}{q_n}\right)\right).
\end{align*}
In the last step of this chain of equalities, the explicit terms on each side truly are equal to one another. By \eqref{eq:qnboundedbym}, we know that the main term in the last line above is $\gg m^{4\delta-2\delta/g+4\delta^2/g}$, while the big-Oh term is bounded by $\ll m^{2\delta+\delta/12g-\delta^2/6g} \log m$. Since $4-2/g \approx 2.314$ and $2+1/12g\approx 2.070$, the main term exceeds the big-Oh term by at least $m^{\delta/10}$. Therefore we have that
\[
\sum_{\substack{uq_n+vq_{n-1}\le m\\ 1 \le v < u, \ (u,v)=1}} 1 =  \frac{3}{\pi^2} \cdot \frac{m^2}{q_n(q_n+q_{n-1})} (1+O(m^{-\delta/10})),
\]
where $\delta/10$ is some positive constant dependent only on $\delta$. The big-Oh constant here is also uniform.

So the number of rationals (in lowest terms) with denominator at most $m$ in $\Gamma'_m$ must be 
\begin{align*}
\sum_{C_s}  \frac{3}{\pi^2} \cdot \frac{m^2}{q_n(q_n+q_{n-1})} (1+O(m^{-\delta/10}))& = \frac{3}{\pi^2} m^2 \lambda( \Gamma'_m) (1+O(m^{-\delta/10}))  \\
&= \frac{3}{\pi^2} m^2 \left(1+O\left(\frac{1}{\log m}\right)\right) (1+O(m^{-\delta/10}))\\
&=  \frac{3}{\pi^2} m^2 + O\left( \frac{m^2}{\log m}\right).
\end{align*}

Since the total number of rationals (in lowest terms) with denominator precisely $m$ in the interval $(0,1)$ is $\phi(m)$, it follows that the total number of rationals (in lowest terms) with denominator at most $m$ in the interval $(0,1)$ is \[ 
\frac{3}{\pi^2}m^2 + O(m\log m).\] Thus, the number of rationals (in lowest terms) with denominator at most $m$ not in $\Gamma'_m$ is $O(m^2/\log m)$. Since, as we noted before, every element of $\Gamma_m$ must have denominator at most $m$ but cannot be in $\Gamma'_m$, this implies that $|\Gamma_{m,\delta,s,\eta}| = O(m^2/\log m)$.
\end{proof}

\section{To $(\epsilon,s)$-normality}

Given $\epsilon>0$ and a non-empty finite string $s$, we will say that a rational number $r$ (with $L(r)$ continued fraction digits and lowest term denominator $q$) is $(\epsilon,s)$-normal if 
\begin{equation}\label{eq:esnormal1}
\left| \frac{A_{s}(r)}{L(r)} - \mu(C_s) \right| < \epsilon
\end{equation}
and
\begin{equation}\label{eq:esnormal2}
\left| \frac{\log q}{L(r)} - g \right| <\epsilon
\end{equation}
Here $A_s(r)$ is the number of time the string $s$ appears in the digits of $r$. 

\begin{prop}\label{prop:esnormal}Let $\epsilon>0$ and $s$ be fixed. 
The number of rational numbers with denominator at most $m$ that are \emph{not} $(\epsilon,s)$-normal is at most $O(m^2/\log m)$, with the implicit constant depending on $\epsilon$ and $s$.
\end{prop}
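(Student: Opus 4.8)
The plan is to deduce this directly from Proposition~\ref{prop:mainestimate}, by showing that for a suitable choice of $\delta$ and $\eta$ depending only on $\epsilon$ and $s$, and for all sufficiently large $m$, every rational $r$ with lowest-term denominator $q\le m$ that is \emph{not} $(\epsilon,s)$-normal must lie in $\Gamma_{m,\delta,s,\eta}$. Granting this, Proposition~\ref{prop:mainestimate} bounds the number of such $r$ by $O(m^2/\log m)$ once $m$ is large; for the finitely many remaining values of $m$ the total number of rationals in play is itself bounded, so the estimate holds there too after enlarging the implied constant.

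Thus the heart of the matter is the contrapositive: if $q\le m$ and $r\notin\Gamma_{m,\delta,s,\eta}$, then $r$ is $(\epsilon,s)$-normal. Unwinding the definition of $\Gamma_{m,\delta,s,\eta}$, the hypothesis $r\notin\Gamma_{m,\delta,s,\eta}$ together with $q\le m$ hands us three facts about the first $n=\lfloor(1-2\delta)(\log m)/g\rfloor$ digits of $r$: that $L(r)\ge n$; that $|\log q_n/n - g|\le\delta$; and that $s$ appears among the first $n$ windows of $r$ with frequency within $\eta$ of $\mu(C_s)$. What stands between this and $(\epsilon,s)$-normality---which is a statement about the \emph{entire} expansion of $r$, of length $L(r)$, and about its terminal denominator $q=q_{L(r)}$---is the behaviour of the tail digits $a_{n+1},\dots,a_{L(r)}$.

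The key step, which I expect to be the main obstacle, is to show this tail is very short relative to $n$. Write $v/u=\langle a_{n+1},\dots,a_{L(r)}\rangle$ in lowest terms with $1\le v<u$; the concatenation formula used in the proof of Proposition~\ref{prop:mainestimate} gives $q=q_{L(r)}=uq_n+vq_{n-1}$, so $uq_n\le q\le m$ and hence $u\le m/q_n$. The bound $|\log q_n/n - g|\le\delta$ forces $q_n\ge e^{n(g-\delta)}$, which by the choice of $n$ is, up to a bounded factor, at least $m^{1-O(\delta)}$; therefore $u\le m^{O(\delta)}$. Since $q_j\gg G^j$, this yields $L(r)-n=L(v/u)\le(\log u)/\log G+O(1)=O(\delta\log m)$, and because $n\asymp\log m$ we get $n\le L(r)\le(1+O(\delta))n$.

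It then remains to transfer the three good properties from the first $n$ digits to the whole expansion, charging the tail to error terms. At most $L(r)-n=O(\delta\log m)$ windows of $r$ meet the tail, so $|A_s(r)-\mu(C_s)n|\le\eta n+O(\delta\log m)$, and dividing by $L(r)\asymp n\asymp\log m$ gives $|A_s(r)/L(r)-\mu(C_s)|\le\eta+O(\delta)$. For the denominator, $n(g-\delta)\le\log q_n\le\log q\le\log m$ combined with $n\le L(r)\le(1+O(\delta))n$ and the choice of $n$ yields $|\log q/L(r)-g|=O(\delta)$ for $m$ large. Choosing $\delta$ and $\eta$ small enough in terms of $\epsilon$ makes both quantities less than $\epsilon$, so $r$ is $(\epsilon,s)$-normal. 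The only routine work is the boundary bookkeeping for the window count and making the $O(\delta)$ estimates on $\log q/L(r)$ fully explicit.
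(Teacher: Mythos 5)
Your proposal is correct and follows essentially the same route as the paper: take the contrapositive ($r\notin\Gamma_{m,\delta,s,\eta}$ with $q\le m$ implies $(\epsilon,s)$-normal), use $q=uq_n+vq_{n-1}>uq_n$ together with the lower bound on $q_n$ from $|\log q_n/n-g|\le\delta$ to get $u\ll m^{O(\delta)}$, then $u\gg G^{L(r)-n}$ to bound the tail length by $O(\delta\log m)=O(\delta n)$, and finally transfer the good behaviour of the first $n$ digits to the full expansion with $O((\eta+\delta)L(r))$ errors, invoking Proposition~\ref{prop:mainestimate} at the end. If anything, your derivation of the lower bound on $q_n$ directly from $|\log q_n/n-g|\le\delta$ is a little cleaner than the paper's citation of \eqref{eq:qnboundedbym}, which was stated with an extra condition on $q_{n-1}$ that isn't part of the definition of $\Gamma_{m,\delta,s,\eta}$; and your remark about the finitely many small $m$ is a small but genuine bit of bookkeeping the paper leaves implicit.
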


\begin{proof}
We claim that if $r\not\in \Gamma_{m,\eta,s,\delta}$ for some appropriate choice of $\eta$ and $\delta$, depending only on $\epsilon$,  then $r $ is $(\epsilon,s)$-normal. Again, let $n=n_\delta= \lfloor (1-2\delta)(\log m)/g\rfloor$. 

If $r\in \Gamma_{m,\eta,s,\delta}$ and $q$ is the  denominator of $r$, then 
\begin{align*}
A_s(r)-L(r) \mu(C_s)&= A_s(n-k+1;r)-n\mu(C_s) + O(L(r)-n)
&= O(\eta n) + O(L(r)-n)
\end{align*}
Following Lemma 9.6 in \cite{Bugeaud}, we note that in the proof of Proposition \ref{prop:mainestimate}, we have that if 
\[
r= \frac{up_n+vp_{n-1}}{uq_n+vq_{n-1}},
\]
then $m \ge q = uq_n+vq_{n-1}> uq_n$, so by applying \eqref{eq:qnboundedbym}, we have that $u \ll m^{3\delta}$. Since $v/u$ has $L(r)-n$ digits, we must have that $u\gg G^{L(r)-n}$ by \eqref{eq:equation}. Comparing these two bounds on $u$, we see that $L(r)-n \ll  \delta\log m\ll \delta n$.

Thus
\[
\left| A_s(r) - L(r) \mu(C_s) \right| = O((\eta+\delta) n) = O((\eta+\delta) L(r)).
\]
By choosing $\eta $ and $\delta$ sufficiently small in terms of $\epsilon$, we obtain \eqref{eq:esnormal1}.

For \eqref{eq:esnormal2}, if $r\in \Gamma_{m,\eta,s,\delta}$ and $q$ is the denominator of $r$, we have
\begin{align*}
\left| \log q - L(r) \cdot g \right| &\le \left| \log q - \log q_n \right| + \left| \log q_n - n \cdot g\right| + (L(r)-n)\cdot g\\
&\le \left| \log (m/q_n)\right| + \delta \cdot n+ O(\delta \log m) \\
&= O(\delta \cdot n) = O(\delta \cdot L(r)). 
\end{align*}
Again, by choosing $\delta$ small enough, we obtain the desired relation.
\end{proof}

\begin{prop}\label{prop:esnormal2}
Let the sequence $\{r_i\}_{i=1}^\infty$ be as in Theorem \ref{thm:main}, and let $\epsilon>0$ and $s$ be fixed. Then the number of $i\le x$ for which $r_i$ is not $(\epsilon,s)$-normal is $O(x/\log x)$.
\end{prop}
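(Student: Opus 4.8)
The plan is to convert the count over indices $i\le x$ into a count over rational numbers with bounded denominator, and then apply Proposition~\ref{prop:esnormal} essentially as a black box.

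First I would use the fact that the sequence $\{r_i\}_{i=1}^\infty$ is listed in order of increasing denominator (and, within a fixed denominator, increasing numerator). Consequently, for a given $x$ there is an integer $m=m(x)$ determined by
\[
\sum_{q<m}\phi(q) < x \le \sum_{q\le m}\phi(q),
\]
and every one of $r_1,r_2,\dots,r_x$ is a fraction in lowest terms with denominator at most $m$. Invoking the classical estimate $\sum_{q\le t}\phi(q) = \frac{3}{\pi^2}t^2 + O(t\log t)$ (already used in the proof of Proposition~\ref{prop:mainestimate}), the defining inequalities for $m$ give $\frac{3}{\pi^2}(m-1)^2 + O(m\log m) < x$, hence $m \ll \sqrt{x}$; in particular $\log m \asymp \log x$ as $x\to\infty$.

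Next, the set of indices $i\le x$ for which $r_i$ fails to be $(\epsilon,s)$-normal is contained in the set of fractions in lowest terms with denominator at most $m$ that fail to be $(\epsilon,s)$-normal. By Proposition~\ref{prop:esnormal}, the latter set has cardinality $O(m^2/\log m)$, with the implied constant depending only on $\epsilon$ and $s$. Substituting $m\ll\sqrt{x}$ and $\log m\asymp\log x$ converts this to $O(x/\log x)$, which is exactly the claimed bound.

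The argument is really just a change of variables from the ``index'' scale to the ``denominator'' scale, so I do not expect any serious obstacle; the only point needing a little care is making the passage $x\leftrightarrow m$ precise, i.e.\ confirming $m\asymp\sqrt{x}$ and noting that the at most $\phi(m)\ll m\ll\sqrt{x}$ fractions with denominator exactly $m$ among $r_1,\dots,r_x$ are negligible and do not affect the final order of magnitude. Everything beyond that bookkeeping is an immediate consequence of Proposition~\ref{prop:esnormal}.
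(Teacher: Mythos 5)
Your proof is correct and is essentially identical to the paper's: both define $m$ by the same $\phi$-sum inequalities, use $\sum_{q\le t}\phi(q)=\frac{3}{\pi^2}t^2+O(t\log t)$ to deduce $x\asymp m^2$, and then apply Proposition~\ref{prop:esnormal} and convert scales. The only (cosmetic) nit is that the step ``hence $m\ll\sqrt{x}$; in particular $\log m\asymp\log x$'' should really cite the two-sided bound $m\asymp\sqrt{x}$, which your defining inequalities do give.
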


\begin{proof}
We assume without loss of generality that $x$ is a positive integer. Let $m$ be the denominator of $r_x$. Then, since the number of rationals in lowest terms with denominator $n$ is $\phi(n)$, we have that
\[
\sum_{n\le m-1} \phi(n) < x \le  \sum_{n\le m} \phi(n),
\]
or, by applying our earlier estimates on the sum of $\phi(n)$, we have
\[
\frac{3}{\pi^2} (m-1)^2 + O((m-1)\log(m-1)) < x \le \frac{3}{\pi^2} m^2+O(m\log m).
\]
By rearranging, we see that $x \asymp m^2$.

The number of $i\le x$ for which $r_i$ is not $(\epsilon,s)$-normal is at most the number of $r_i$'s with denominator at most $m$ that are not $(\epsilon,s)$-normal, and by Proposition \ref{prop:esnormal}, this is at msot $O(m^2/\log m)$. Since $x\asymp m^2$, this gives the desired result.
\end{proof}

\section{Proof of Theorem \ref{thm:main}}

Let $s$ be an arbitrary finite string of digits. To prove that $x_f$ is normal we must show that 
\[
\lim_{N\to \infty} \frac{A_s(N;x_f)}{N} = \mu(C_s).
\]
Let $\epsilon$ be an arbitrary positive number that will be allowed to go to $0$ at the end of the proof.

For a given integer $N$, let $M=M(N)$ be such that $N$th digit of $x$ lies in the string corresponding to the rational number $r_{f(M)}$, so that
\[
\sum_{n=1}^{M-1} L(r_{f(n)}) < N \le \sum_{n=1}^M L(r_{f(n)}).
\]
By one of our assumptions, we have that $L(r_{f(M)}) = o( \sum_{n=1}^M L(r_{f(n)}))$, so that $N\sim \sum_{n=1}^M L(r_{f(n)})$, $M=o(N)$, and $L(r_{f(M)})=o(N)$. Therefore, if we momentarily let $N'=\sum_{n=1}^{M} L(r_{f(n)})$, then
\[
A_s(N;x_f)= A_s(N';x_f) +o(N).
\]

The number of strings of length $L(s)$ that start in the expansion of one rational number $r_{f(n)}$ and end in the expansion of a different rational nmber $r_{f(n')}$, with $n,n'\le M$ is at most $m\cdot L(s)$. Since $L(s)$ is fixed and $M=o(N)$, we therefore have that
\[
A_s(N;x_f) =\sum_{n\le M} A_s(r_{f(n)})+ o(N).
\]

Let $S$ be the set of integers $n$ such that $r_{f(n)}$ is \emph{not} $(\epsilon,s)$-normal. The assumptions of the theorem together with Proposition \ref{prop:esnormal2} imply that $S$ has asymptotic density $0$. Therefore we have that
\begin{align*}
\sum_{\substack{n\le M \\ n \in S}} A_s(r_{f(n)})  &= O \left( \sum_{\substack{n\le M \\ n \in S}} L(r_{f(n)}) \right) = O\left( \max_{n\le M} L(r_{f(n)}) \cdot \sum_{\substack{n\le M \\ n \in S}} 1\right)\\
&= o \left( M \cdot   \max_{n\le M} L(r_{f(n)})\right) = o(N).
\end{align*}

Let $S^c = \mathbb{N}\setminus S$ denote the set of integers $n$ such that $r_{f(n)}$ is $(\epsilon,s)$-normal. In particular, if $n\in S^c$, we have that $A_s(r_{f(n)})=L(r_{f(n)}) (\mu(C_s)+O(\epsilon))$. Therefore,
\begin{align*}
\sum_{\substack{n\le M\\ n \in S^c}} A_s(r_{f(n)}) &= \sum_{\substack{n\le M\\ n \in S^c}}L(r_{f(n)}) (\mu(C_s)+O(\epsilon)) \\
&= \mu(C_s) \left(  \sum_{n \le M}L(r_{f(n)}) -  \sum_{\substack{n\le M\\ n \in S}}L(r_{f(n)})\right) + O\left( \epsilon  \sum_{\substack{n\le M\\ n \in S^c}}L(r_{f(n)})\right)\\
&= \mu(C_s) \left( N(1+o(1))-o(N)\right) + O\left( \epsilon \sum_{n\le M} L(r_{f(n)})\right)\\
&= \mu(C_s) N + o(N) + O(\epsilon N).
\end{align*}

Thus,
\[
A_s(N;x_f) = \mu(C_s)N + o(N) + O(\epsilon N).
\]
By dividing through by $N$ and noting that $\epsilon$ may be taken as small as desired, we get the desired equality in the limit.

\section{Proof of Corollary \ref{cor:main}}

This follows from Theorem \ref{thm:main}; we need only show that the conditions hold.

As in the statement of the corollary, let $\{i_j\}_{j=1}^\infty$ be the indices such that $r_{i_j}$ is in $R$, arranged in increasing order, so that the function $f(j)=i_j$ gives the desired function in Theorem \ref{thm:main}. The assumption on the size of $R(m)$, combined with the proof of Proposition \ref{prop:esnormal2}, shows that the number of $j$ such that $i_j\le x$ is an order of magnitude larger than $x/\log x$---that is,
\[
\frac{x}{\log x} = o\left( \#\{  i_j\le x \} \right).
\] Thus, if $S$ is any subset of $\mathbb{N}$ such that $\#\{n\in S:n\le x\}=O(x/\log x)$, then 
$\#\{n\in S:n\le x\}= o \left( \#\{i_j\le x\}\right)$. So, since the function $f$ is strictly increasing, we have that $\#f^{-1}(\{n\in S:n\le x\})= o(\# f^{-1}(\{i_j\le x\}))$, but since $f^{-1}(\{i_j\le x\})$ is just the set of all positive integers up to some point, this immediately implies that $f^{-1}(S)$ has asymptotic density $0$ as desired.

It remains to show the desired fact abouts about the $L$ function. We know that the maximum of $L(r)$ for $r\in R(m)$ is $O(\log m)$ (by \eqref{eq:equation}). At the same time, by \eqref{eq:esnormal2}, any rational number $r$ with denominator between $m^{1/2}$ and $m$ that is $(\epsilon,s)$-normal will have on the order of $\log m$, and by Proposition \ref{prop:esnormal}, this accounts for all but $O(m^2/\log m)$ of the rationals in $R(m)$, a negligible amount. Again applying the ideas of the proof of Proposition \ref{prop:esnormal2} to swap between considering all rationals with denominator at most $m$ and the first $x$ rationals, and then comparing these two facts with the necessary restrictions on $L(r)$ in Theorem \ref{thm:main} proves the corollary.

\section{Proof of Theorem \ref{thm:primes}}

Recall that we have three cases of subsequences we are considering in this case: \begin{enumerate}
\item the subsequence whose numerators are in $\mathbb{N}$ and whose denominators are in $\mathbb{P}$; 
\item the subsequence whose numerators are in $\mathbb{P}$ and whose denominators are in $\mathbb{N}$;  or,
\item the subsequence whose numerators and denominators are in $\mathbb{P}$.
\end{enumerate}
Let us refer to these rationals as Type 1, 2, or 3 rationals respectively, and denote the set of such rationals (in lowest terms) with denominator at most $m$ by $R_1(m)$, $R_2(m)$ or $R_3(m)$ respectively.

By elementary techniques, one can show that $R_1(m)$ and $R_2(m)$ are on the order of $m^2/\log m$ and $R_3(m)$ is on the order of  $m^2/(\log m)^2$. 

The only difference between the proof of this theorem and the proof of Corollary \ref{cor:main} will come in the estimates in the proof of Proposition \ref{prop:mainestimate}. There we considered strings $\frak{s}=[a_1,a_2,\dots,a_n]$ with two fractions $p_{n-1}/q_{n-1} = \langle a_1,a_2,\dots,a_{n-1}\rangle$ and $p_n /q_n = \langle a_1,a_2,\dots, a_n\rangle$. We showed that the number of fractions with denominator at most $m$ in the set $C_\frak{s}$ (with additional restrictions on $q_n$ and $q_{n-1}$) should be asymptotic to $\pi^2 m^2/6q_n(q_n+q_{n-1})$. Then, by summing over all intervals corresponding to ``good" strings, we obtained the desired result.

To prove Theorem \ref{thm:primes}, it suffices to prove that for any string $s$, we have that
\begin{equation}\label{eq:Csbound}
|C_\frak{s} \cap R_i(m) | = \begin{cases} O\left( \dfrac{m^2}{q_n(q_n+q_{n-1})\sqrt{\log m}} \right) & i =1,2\\
O\left( \dfrac{m^2}{q_n(q_n+q_{n-1})(\log m)^{3/2}} \right) & i=3.\end{cases}
\end{equation}
By summing over all $\frak{s}$'s of length $n$ with $C_\frak{s}$ \emph{not} in $\Gamma'_m$, we replace the $q_n(q_n+q_{n-1})$ term in the denominator with an additional copy of $\log m$, and therefore see that the number of fractions in $\Gamma_{m,\delta,s,\eta}\cap R_i(m)$  is little-oh of the number of fractions in $R_i(m)$. The rest of the proof is identical to that for Corollary \ref{cor:main}.

We remark briefly that in the proof of Theorem \ref{thm:main}, we had to sum over all $\frak{s}$'s with $C_\frak{s}$ in $\Gamma'_m$ because we needed stronger bounds on the size of the denominators. Otherwise, the size of the big-Oh term $O(m/q_{n-1})$ could overwhelm the size of the main term $m^2/q_n(q_n+q_{n-1})$. In this proof, as in the proof of Adler, Keane, and Smorodinsky, we can obtain sufficiently strong bounds on the size of the set without needing to assume anything good about the denominators, and thus can sum over the $C_\frak{s}$'s not in $\Gamma'_m$.

We have told a small lie above: we still need some bounds on the size of the denominators $q_n$. Consider $\frak{s}$ of length $n$ with $m\ge q_n \ge m\exp\{-(\log m)^{3/4}\}$. By our work earlier in this paper, the number of rational numbers with denominator at most $m$ in the cylinder set $C_{\frak{s}}$ is bounded by the number of positive integers $v\le u$ with $uq_n+vq_{n-1} \le m$. Clearly, we must have that $u\le m/q_n \le \exp\{(\log m)^{3/4}\}$, and $v$ must be bounded by $u$, so there are at most $\exp\{2(\log m)^{3/4}\} = o (m)$ such rationals. This is such an insignificant portion of the sets $R_i(m)$ that we may safely ignore them and presume that $q_n\le m\exp\{-(\log m)^{3/4}\}$ for the remainder of the proof.

We will need the following result.

\begin{lem}\label{lem:primes} Let $a,q,a',q'$ be positive integers with $(a,q)=(a',q')=1$.

Let $\pi'(x;q,a)$ denote the number of $\ell$ in the interval $1\le \ell \le x$ such that $\ell q+a$ is prime. Then 
\[
\pi'(x;q,a) \ll \frac{x \log \log 16q}{\log x}.
\]
Suppose that $aq'-qa'=t\neq 0$, and let $\pi'(x;q,a;q',a')$ denote the number of $\ell $ in the interval $1\le \ell \le x$ such that both $\ell q +a $ and $\ell q'+a'$ are both prime simultaneously. Then
\[
\pi'(x;q,a;q',a') \ll \frac{x (\log \log (16qq'))^2 \log \log 16|t|}{(\log x)^2}.
\]
The bounds in this lemma are uniform in all variables.
\end{lem}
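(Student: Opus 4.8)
The plan is to prove both bounds via a standard sieve (Selberg or a Brun-type upper bound sieve), treating the linear form $\ell \mapsto \ell q + a$ (or the pair of linear forms) and carefully tracking how the singular series depends on the coefficients $q$, $q'$, and the ``discriminant'' $t = aq' - qa'$. This is essentially the content of the Brun--Titchmarsh theorem for the single form and a two-dimensional sieve for the pair, but with explicit dependence on the moduli; the key references to keep in mind are the uniform upper bounds for primes in arithmetic progressions and for prime pairs (Halberstam--Richert type estimates). I would first handle $\pi'(x;q,a)$: apply an upper bound sieve to the set $\{\ell q + a : 1 \le \ell \le x\}$, sifting out primes $p \nmid q$ (since $(a,q)=1$ forces $\gcd(\ell q + a, p) = 1$ automatically for $p \mid q$). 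The sieve gives a bound of the shape $\frac{x}{\log x}\prod_{p \mid q}(1 - 1/p)^{-1}$, and the product $\prod_{p\mid q}(1-1/p)^{-1} \ll \log\log 16q$ is Mertens' estimate on the prime divisors of $q$ (the constant $16$ just ensures the $\log\log$ is well-defined and positive for small $q$). That yields the first inequality.

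For the second bound, the plan is to sieve the set of pairs $\{(\ell q + a,\ \ell q' + a') : 1 \le \ell \le x\}$, removing $\ell$ for which either coordinate is divisible by a small prime $p$. The local density of bad $\ell$ modulo $p$ is: $2/p$ for $p \nmid qq't$, i.e., when the two linear forms are distinct mod $p$ and neither is identically obstructed; roughly $1/p$ when $p \mid qq'$ but $p \nmid t$ (one form is always coprime to $p$, the other is a genuine progression); and we simply do not sift at $p \mid t$ (when $p \mid t$ the two forms coincide mod $p$, so we lose a factor but only at primes dividing $t$). The dimension-$2$ sieve then produces a main term $\frac{x}{(\log x)^2}$ times a singular-series factor. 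That factor is, up to absolute constants, $\prod_{p \mid t}(1 - 2/p)^{-1} \cdot \prod_{p \mid qq'}(1 - 1/p)^{-1}\cdot\prod_{p}(1-2/p)(1-1/p)^{-2}$; the last (convergent) product is $O(1)$, the second is $\ll (\log\log 16qq')^2$ (again Mertens), and the first is $\ll \log\log 16|t|$. Multiplying these gives exactly the claimed bound, and the uniformity in all variables is automatic because every estimate used (Brun/Selberg sieve remainder bounds, Mertens' product estimates) is uniform.

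The main obstacle I expect is the bookkeeping of the singular series and making sure the error term in the sieve is genuinely negligible uniformly in $q, q'$. For the single form, the Brun--Titchmarsh theorem already packages this cleanly, so there is little to do. For the pair, one must check that the ``level of distribution'' needed is only a small power of $x$, which is fine since $q, q' \le m$ and (after the reduction already made in the surrounding proof) $x$ is a fixed small power of $m$ but actually here $x$ plays the role of $m/q_n$ etc.; one should double-check that the sieve error, typically $O_A(x/(\log x)^A)$ for the pair after choosing the sieve parameter as a suitable power of $x$, stays below the main term even when the main term is as small as $x/(\log x)^2$ times a $\log\log$ factor. Provided $q, q'$ are at most polynomial in $x$ (which holds in all our applications, since $q_n \le m$ and the relevant $x = m/q_n$ satisfies $\log x \asymp \log m$ after the reductions already in place), this is routine. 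I would state the sieve input as a black-box lemma from Halberstam--Richert and then simply compute the three Mertens-type products, flagging that the degenerate cases ($q = q'$, or $t$ with many prime factors) are handled by the $\log\log$ factors and cause no trouble.
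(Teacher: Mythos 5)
Your approach is the same as the paper's: both apply a Brun-type upper bound sieve (Halberstam--Richert) to the linear form(s) and control the resulting product of local factors by Mertens-type estimates over primes dividing $q$, $qq'$, and $t$. The only substantive slip is in your treatment of the primes $p\mid t$. You choose ``not to sift'' at those primes, which makes the local factor there $(1-1/p)^{-2}$ (equivalently, your singular-series expression contains $\prod_{p\mid t}(1-2/p)^{-1}$), and that product is $\ll (\log\log 16|t|)^2$, not $\ll \log\log 16|t|$ as you claim --- note $\prod_{p\mid t}(1-2/p)^{-1}\asymp \prod_{p\mid t}(1-1/p)^{-2}=(t/\varphi(t))^2$. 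To land on the lemma's single factor of $\log\log 16|t|$, do what the paper does: at $p\mid t$ with $p\nmid qq'$ the two forms $\ell q+a$ and $\ell q'+a'$ hit the same residue class mod $p$, so sieve out that one class (local weight $1-1/p$), giving $\exp\bigl(\sum_{p\mid t}1/p\bigr)\ll\log\log 16|t|$. This is a minor bookkeeping refinement; the extra $\log\log$ power would in any case be harmless for the application in Theorem \ref{thm:primes}, but you should be aware your stated product bound and your claimed conclusion don't quite match as written (and you should also keep in mind that $\prod_{p\mid t}(1-2/p)^{-1}$ diverges at $p=2$, so the $p=2$ term needs to be separated).
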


\begin{proof}
We will apply Brun's sieve, following the work of Halberstam and Richert \cite{HR}. If we are considering $\pi'(x;q,a)$, let $\kappa=1$, and otherwise let $\kappa=2$. We let $Q=q$ if $\kappa=1$ and $Q=qq'$ otherwise. If $\kappa=1$, we will also assume that $t$ exists and equals $1$. We will also assume that $x$ is large enough so that $\log\log \log x$ exists and is positive.

We take $\mathcal{A}$ to be the set $[1,x]$, and, if $\kappa=1$, for a prime $p$,  we take $\mathcal{A}_p$ to be the subset of $\ell\in \mathcal{A}$ such that $\ell q+a$ is divisible by $p$. The set $\mathcal{A}_p$ is empty if $p|q$. Otherwise, $\mathcal{A}_p$ consists of the elements of $\mathcal{A}$ that fall into the reside class $-a/q$ modulo $p$. On the other hand, if $\kappa=2$, we now take $\mathcal{A}_p$ to be the subset of $\ell \in \mathcal{A}$ such that $\ell q +a$ or $\ell q'+a'$ is divisible by $p$. This set is again empty if either $p|q$ or $p|q'$. Otherwise, $\mathcal{A}_p$ consists of the elements of $\mathcal{A}$ that fall into the residue classes $-a/q$ or $-a'/q'$ modulo $p$. By assumption these are distinct unless $p$ divides $t$.

Then, we may apply Theorem 2.2 of Halberstam and Richert. (It is elementary to see that the conditions of the theorem hold, so we do not illustrate them here.) Therefore, the size of $\pi'(x;q,a)$ and $\pi'(x;q,a,q',a')$ is bounded  by
$O(x W(z))$,
with  $z=\sqrt{x}$. Here \[ W(z) = \prod_{\substack{p< z \\ p \nmid Q,\  p \nmid t}} \left( 1- \frac{\kappa}{p} \right) \cdot  \prod_{\substack{p< z \\ p \nmid Q,\  p | t}} \left( 1- \frac{1}{p} \right).\]

 Consider the sum $\sum_{p|Q} \frac{1}{p}$. This sum is maximized if all the primes dividing $Q$ are as small as possible. Since $\prod_{p\le y} p \ll e^y$ by the prime number theorem, we have that there exists a large constant $C$ so that
\[
\sum_{p|Q} \frac{1}{p}\ll \sum_{p\le C\log Q} \frac{1}{p} \ll \log \log \log  16Q,
\]
by Mertens' theorem. Here the $16$ is included to make sure that everything is positive. By a similar argument, one can show that
\[
\sum_{p|t} \frac{1}{p}\ll \log \log \log 16|t|
\]

Thus, we have that 
\begin{align*}
W(z) &\le \exp \left( - \sum_{p<z } \frac{\kappa}{p} + \sum_{p|Q} \frac{\kappa}{p} + \sum_{p|t} \frac{1}{p} \right) \\
&\ll \left( \frac{\log \log 16Q}{\log z} \right)^\kappa \cdot \log\log(16|t|).
\end{align*}
Recalling our assumption that $z=\sqrt{x}$, we obtain the desired bounds.
\end{proof}

Consider the case of the Type 1 rationals. Let $P_m$ denote all the primes less than $m$. Recall that we have assumed $q_n \le m \exp\{-(\log m)^{3/4}\}$ so that $(\log m/q_n)^{-1} \le (\log m)^{-3/4}$; also, trivially $\log \log 16q_n \ll \log \log m$. By applying Lemma \ref{lem:primes}, the number of points in $R_1(m)$ that are also an interval $C_\frak{s}$ is given by
\begin{align*}
\sum_{\substack{uq_n+vq_{n-1} \in P_m\\ 1\le v< u}} 1 &\le  \sum_{\substack{v\le m/q_n+q_{n-1} \\ (v,q_n)=1}} \pi'(m/q_n; q_n, vq_{n-1})\\
&\ll \sum_{\substack{v\le m/q_n+q_{n-1} \\ (v,q_n)=1}} \frac{m(\log \log 16q_n)^2}{q_n \log (m/q_n)}\\
&\ll \sum_{v\le m/q_n+q_{n-1} } \frac{m(\log \log m)^2}{q_n (\log m)^{3/4}}\\
&\ll \frac{m^2(\log \log m)^2}{q_n(q_n+q_{n-1})(\log m)^{3/4}},
\end{align*}
and this clearly satisfies \eqref{eq:Csbound}. Note that the restriction that $(u,v)=1$ is unnecessary due to wanting $uq_n+vq_{n-1}$ to be prime.

For the Type 2 rationals,  if we run through the argument the same way we did for the Type 1 rationals, we get the inequalities, but with $\pi'(m/q_n;p_n,vp_{n-1})$ in place of $\pi'(m/q_n;q_n,vq_{n-1})$. The desired bound follows in the same way.

The case of Type 3 rationals also proceeds as the case of Type 1 rationals, but with 
\[
\pi'(m/q_n; q_n, vq_{n-1}, p_n,vp_{n-1}) \text{ in place of } \pi' (m/q_n,q_n,vq_{n-1}).
\]
Recall that $|q_np_{n-1}-q_{n-1}p_n|=1$, so we have that $t$ in this case will equal $\pm v$, however, as we have that $v\le m/q_n+q_{n-1}$, we have that $\log \log 16|v|$ is bounded by $O(\log \log m)$, and the desired result holds from this.

This completes the proof.

\section{Acknowledgments}

The author acknowledges assistance from the Research and Training Group grant DMS-1344994 funded by the National Science Foundation.

The author would also like to thank Paul Pollack for his help.

\end{document}